\documentclass[12pt]{amsart} 
\usepackage[text={6.5in,9in},centering]{geometry}

\usepackage{color}
\usepackage{amssymb,amsmath}
\usepackage{amscd}
\usepackage{graphicx}
\usepackage[config,labelfont={rm,rm},textfont=rm,aboveskip=0in,belowskip=2ex]{caption,subfig}
\DeclareMathAlphabet\mathbold{OML}{cmm}{b}{it} 
\graphicspath{{FigPDF/}{FigEPS/}}

\newtheorem{theorem}{Theorem}[section]
\newtheorem{proposition}[theorem]{Proposition}
\newtheorem{lemma}[theorem]{Lemma}
\newtheorem{remark}[theorem]{Remark}
\usepackage{bm}
\usepackage{xy}
\numberwithin{equation}{section} \numberwithin{table}{section} \numberwithin{figure}{section}

\newcommand{\Tri}[1]{\mathcal{T}_{#1}}

  \author{Guozhu Yu}
   \address{School of Mathematics, Sichuan University, Chengdu, Sichuan, China, 610064}
   \email{yuguozhumail@yahoo.com.cn}

    \author{Jinchao Xu}

  \address{Department of Mathematics, Pennsylvania State University,
    University Park, State College PA 16802}


   \email{xu@math.psu.edu}


   \author{Ludmil T. Zikatanov}
  \address{Department of Mathematics, Pennsylvania State University,
    University Park, State College PA 16802}


   \email{ltz@math.psu.edu}

   \title[two-level method for anisotropic diffusion equations]{Analysis of two-level method for anisotropic diffusion equations on
     aligned and non-aligned grids}

\thanks{This work was completed while the first author was visiting
the Center for Computational Mathematics and Applications (CCMA) at Penn
State. Thanks go to the CCMA at PSU for the support of  this work.  
The work of the third author is supported in part by the National Science
Foundation, DMS-0810982, and OCI-0749202}


\date{\today}

\begin{document}
\setlength{\abovecaptionskip}{0pt} \setlength{\belowcaptionskip}{0pt}
\begin{abstract}
  This paper is devoted to the multigrid convergence analysis for the
  linear systems arising from the conforming linear finite element
  discretization of the second order elliptic equations with
  anisotropic diffusion.  The multigrid convergence behavior is known
  to strongly depend on whether the discretization grid is aligned or
  non-aligned with the anisotropic direction and analyses in the paper
  will be mainly focused on two-level algorithms.  For an aligned grid
  case, a lower bound is given for point-wise smoother which shows
  deterioration of convergence rate.  In both aligned and non-aligned
  cases we show that for a specially designed block smoother the
  convergence is uniform with respect to both anisotropy ratio and mesh size
  in the energy norm. The analysis is complemented with numerical
  experiments which confirm the theoretical results.
\end{abstract}

\subjclass{65F10, 65N20, 65N30}

\keywords{anisotropic problem, finite element discretization,
  iterative methods, two-level methods}

\maketitle

\section{Introduction}
In this paper we will study multilevel methods for anisotropic partial differential equations (PDEs) by finite element (FE) methods and in
particular we will analyze the convergence behavior of some methods for anisotropic diffusion equations on grids that are either aligned or
non-aligned with the anisotropy.

There are already many convergence results in the literature for multilevel method on anisotropic problems when the underlying FE grid is
aligned with the anisotropy direction.  The case of constant anisotropy was considered by Stevenson \cite{Stevenson1993,
  Stevenson1994}, who established a uniform convergence of the V-cycle
multigrid methods. Main tools in his analysis are the so called classical \emph{smoothing} and \emph{approximation} properties (see
Hackbusch~\cite{Hackbusch1985}). The case of ``mildly'' varying anisotropy was analyzed in a work by Bramble and Zhang~\cite{Bramble-Zhang2001}.
Using a different theoretical framework developed in Bramble, Pasciak, Wang and Xu~\cite{Bramble-Pasciak-Wang-Xu1991} and Xu~\cite{Xu1992},
Neuss~\cite{Neuss1998} also showed uniform convergence of the V-cycle algorithm for anisotropic diffusion problem. More recently, Wu, Chen, Xie
and Xu~ \cite{Wu-Chen-Xie-Xu} analyzed V-cycle multigrid with line smoother and standard coarsening, and V-cycle multigrid with point
Gauss-Seidel smoother and semi-coarsening at the same time, and they were able to prove convergence under weaker assumptions on the regularity
of the solution of the underlying PDE.  Another technique, based on tensor product type subspace splittings and a semi-coarsening was proposed
and analyzed by Griebel and Oswald~\cite{Griebel-Oswald1995}. They have shown uniform and optimal condition number bounds for multilevel
additive preconditioners.

The aforementioned theoretical convergence results on multigrid methods for the anisotropic diffusion equations are, however, all carried out
under one main assumption that the anisotropy direction is aligned with the mesh.  But such an assumption is not always satisfied in practice.
In this paper, we will make an attempt to develop uniform convergence theory in certain cases when this aligned grid assumption is not
satisfied.  More specifically, we will study the problem \eqref{equation:model1} in the case that $\Omega$ is a square domain triangulated by
uniform grids that is rotated by an angle $\omega\in[0, \pi]$.  We have grids that are not aligned with anisotropy except for special cases that
$\omega=0$, $\omega=\frac{\pi}{2}$ or $\omega=\frac{3\pi}{4}$.

For this special class of domains and grids, we will design a
two-level method and prove its uniform convergence (with respect to
both anisotropy and mesh size).  We are not yet able to extend our
theoretical analysis neither to multilevel (more than two levels)
case, nor to more general anisotropic problems. We hope however that
the analysis presented here, even though in a special case, can be
extended to handle more general anisotropic problems.

We would like to point out that our work was partially motivated by
some recent theoretical results for nearly singular problems
(see~\cite{Lee-Wu-Xu-Zikatanov2007}).  Indeed, anisotropic diffusion
equation gets more nearly singular when anisotropy gets
smaller. Techniques such as line smoother or semi-coarsening
correspond in some way to space splittings of the so called near
kernel components of the anisotropic diffusion problem.  We refer
to~\cite{Lee-Wu-Xu-Zikatanov2007} for description of such splittings.

The rest of this paper is organized as follows. In
Section~\ref{section:Preliminaries} we introduce the notation and
preliminaries. In Section~\ref{section:Theorem} we state the main
result. We then prove several stability and interpolation estimates
for coarse grid interpolant in Section~\ref{section:StabilityC} and
for the fine grid interpolant in Section~\ref{section:StabilityF}. In
section~\ref{section:Proof} we prove the main theorem, already stated
in section~\ref{section:Theorem}. Numerical experiments, which verify
the theory are given in section~\ref{section:Numerical}.

\section{Preliminaries and notation\label{section:Preliminaries}}
Consider the anisotropic diffusion equation on a square domain $\Omega\subset \mathbb{R}^2$:
 \begin{equation}\label{equation:model1}
 \left\{
 \begin{array}{rl}
 -u_{xx}-\epsilon u_{yy}=f,  &\mbox{in}\quad\Omega,\\
                       u=0,  &\mbox{on}\quad \partial\Omega,\\
 \end{array}
 \right.
 \end{equation}
 where $\epsilon>0$ is a constant. We are interested in the case
 when $\epsilon\rightarrow 0$. The weak formulation of (\ref{equation:model1})
 is: Find $u\in H_{0}^{1}(\Omega)$ such that
 \begin{equation}\label{equation:model2}
 a(u,v)=(f,v), \quad \forall v\in H_{0}^{1}(\Omega),
 \end{equation}
 where
 \begin{equation*}
 a(u,v)=\int_{\Omega}(\partial_{x}u\partial_{x}v+\epsilon\partial_{y}u\partial_{y}v)dxdy,
 \quad and\quad (f,v)=\int_{\Omega}(fv)dxdy.
 \end{equation*}

 We consider family of computational domains $\Omega$ obtained by
 rotations of a fixed domain $\Omega_0=(-1,1)^2$ around the
 origin. The angles of rotation are denoted by $\omega$ and we
 consider $\omega\in[0,\pi]$, since this covers all the possible cases
 of alignment (non-alignment) of the anisotropy and the FE grid.

 We assume that we have initial triangulation $\Tri{0}$ of the domain
 $\Omega_{0}$, obtained by dividing $\Omega_{0}$ into $N\times N$
 equal squares and then dividing every square into two triangles. Then
 we rotate $\Omega_0$ around the origin to obtain the computational
 domain $\Omega$ and its triangulation $\Tri{h}$. The finite element
 function space associated with $\Omega$ and $\Tri{h}$ will be the
 space of piece-wise continuous linear functions with respect to
 $\Tri{h}$ and we denote this space by $V_h$. One may see three such
 domains shown in Figure~\ref{fig:three-domains}. In such setting one
 case of grid aligned anisotropy corresponds to $\Omega=\Omega_0$, or
 equivalently, $\omega=0$.

%
%
%
%
%
%
%
%
%
%
%

\begin{figure}[!h]
\begin{center}
  \scalebox{0.33}[0.33]{\includegraphics{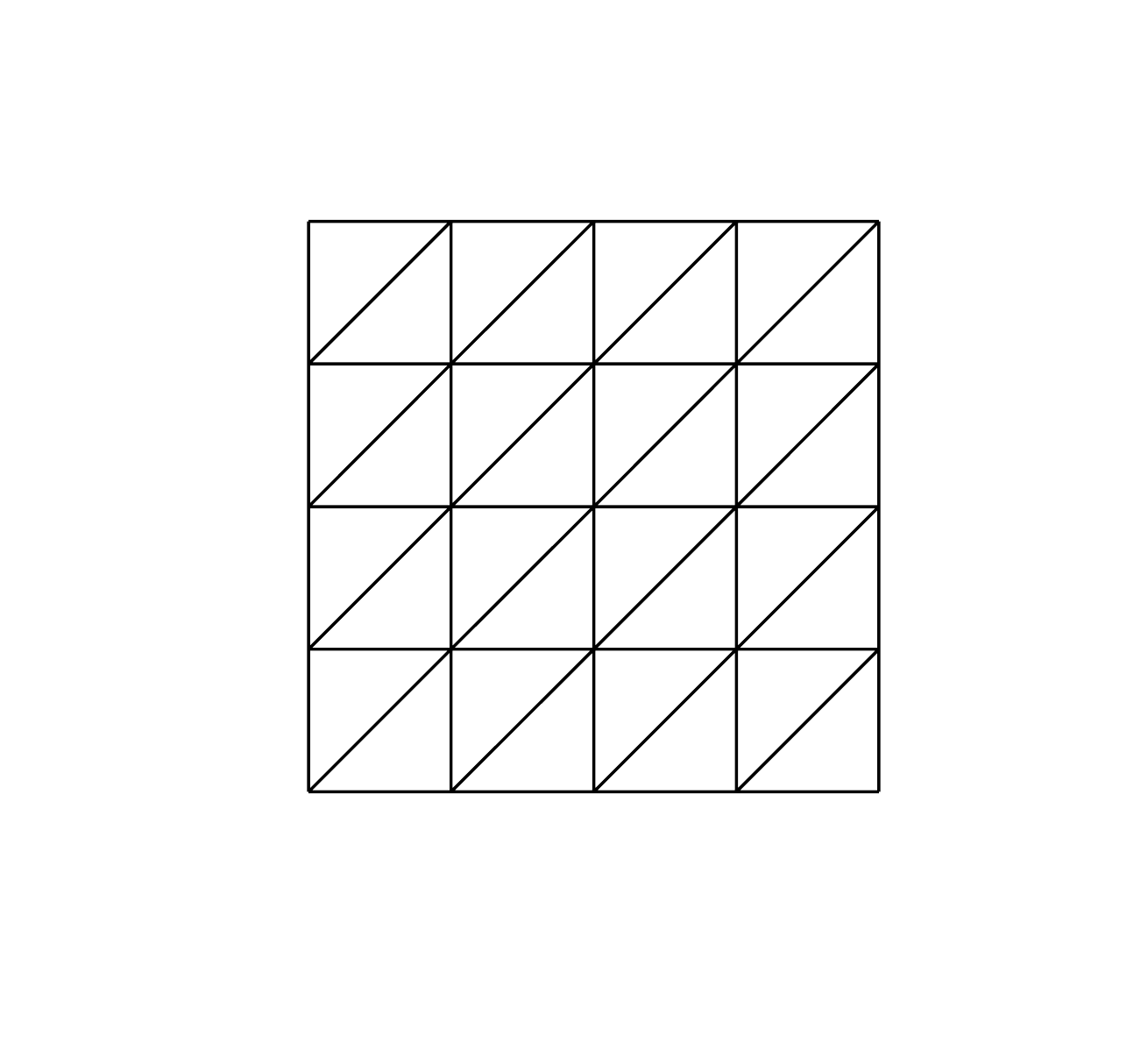}}
 \scalebox{0.33}[0.33]{\includegraphics{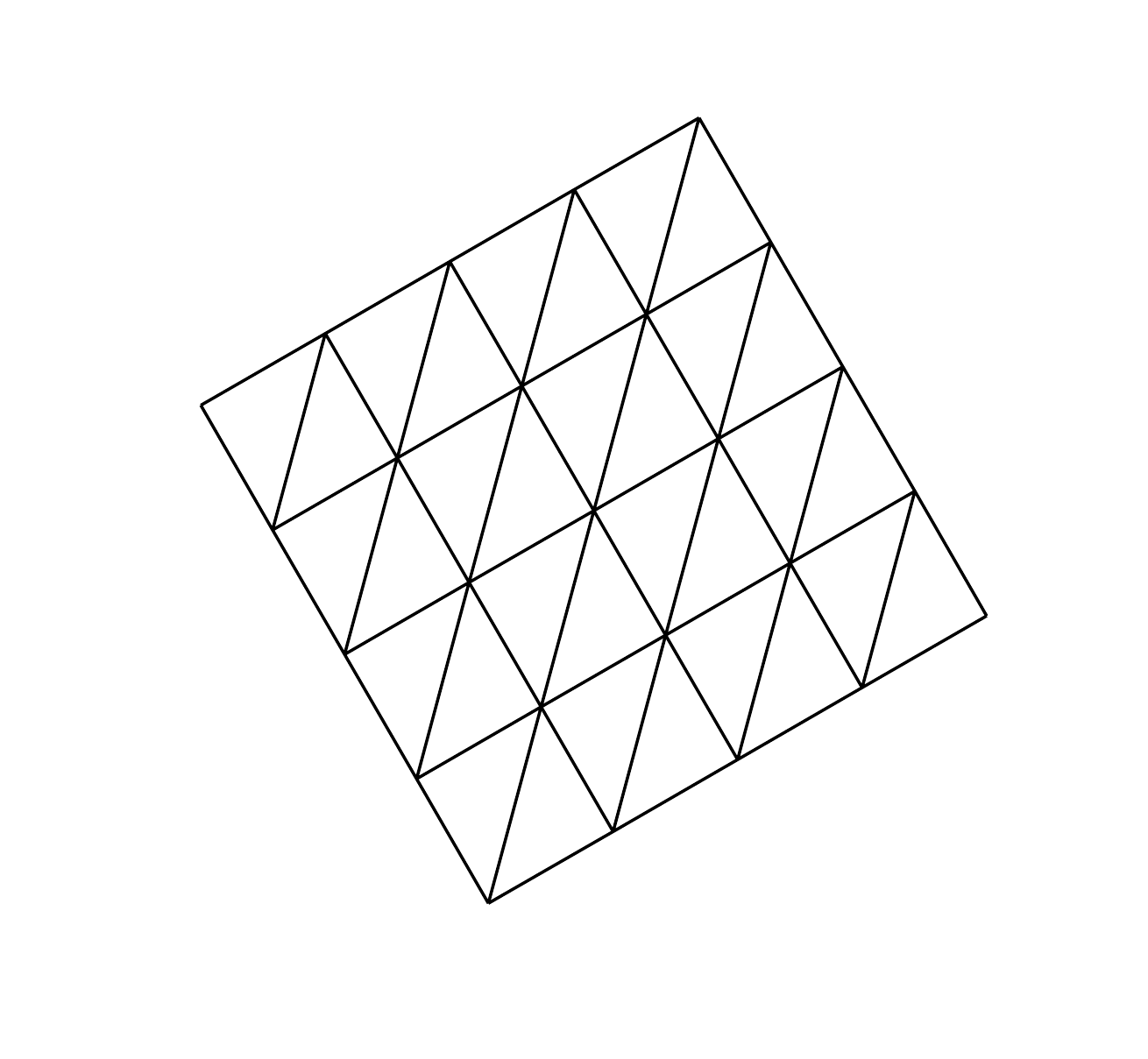}}
  \scalebox{0.33}[0.33]{\includegraphics{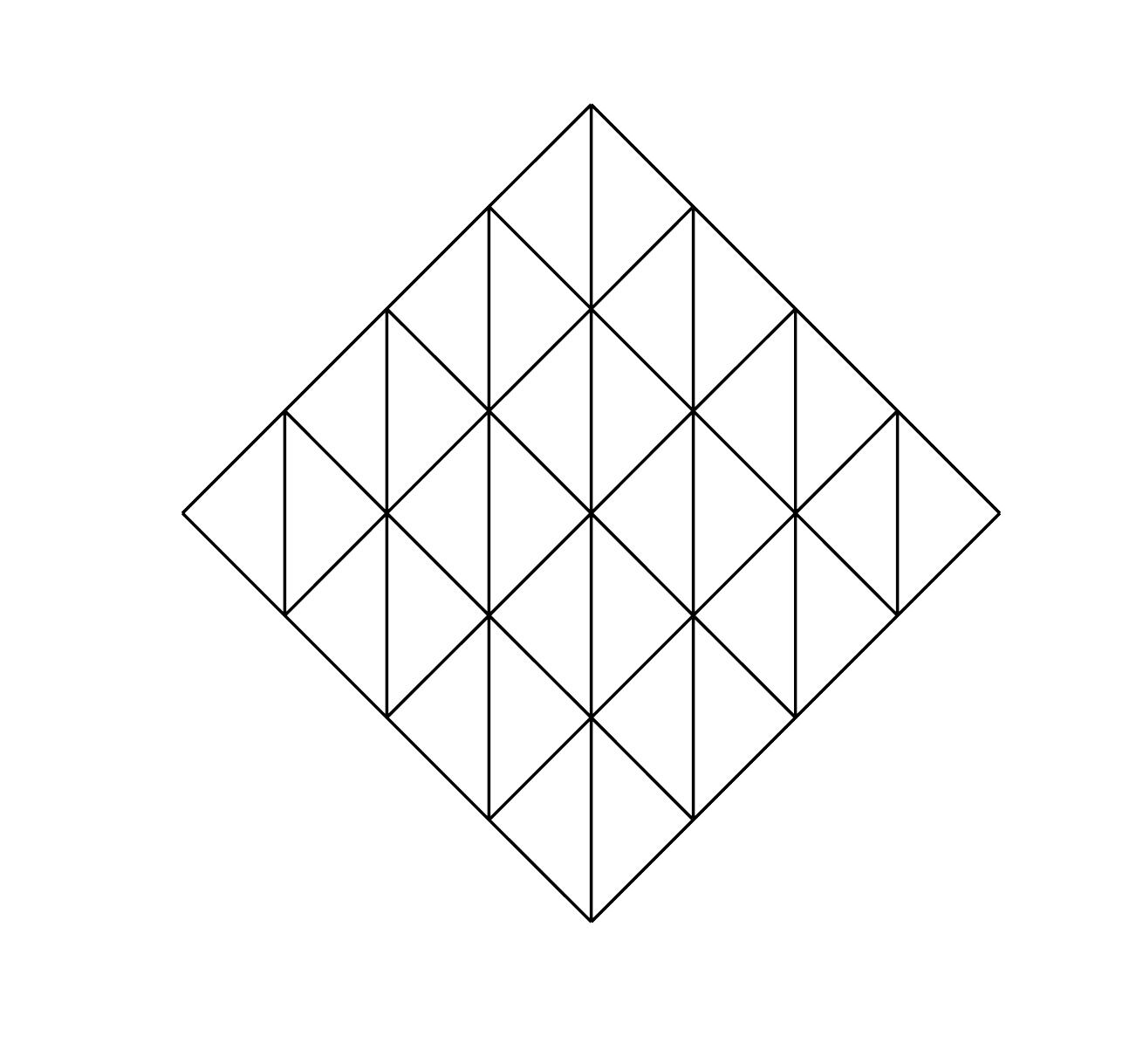}}
  \caption{The domains from left to right are corresponding to $\omega=0$,
  $\omega=\frac{\pi}{6}$ and $\omega=\frac{\pi}{4}$ respectively.
  \label{fig:three-domains}}
\end{center}
\end{figure}

%
%
%
%

 Given a coarse mesh $\Tri{H}$, assume the fine mesh $\Tri{h}$ is
 obtained from $\Tri{H}$ by splitting each of the triangles in the
 triangulation $\Tri{H}$ into four congruent triangles. One clearly
 then has $h=H/2$.  The spaces of continuous piece-wise linear
 functions corresponding to the partitions $\Tri{h}$ and $\Tri{H}$
are denoted by $V_{h}$ and $V_{H}$. As it is customary $I_{h}$, $I_{H}$ will denote the nodal interpolation operators mapping to $V_{h}$ and
$V_{H}$  respectively.

 For the analysis of the two-level method,
 we introduce partition of unity $\{\theta_{i}(y)\}_{i=1}^{L}$,
where $L=\left[\frac{y_{max}-y_{min}}h\right]$, with $y_{max}=\max\limits_{(x,y)\in \Omega}\{y\}$, $y_{min}=\min\limits_{(x,y)\in \Omega}\{y\}$.
We define $\theta_i$ as follows:
 \begin{equation}\label{eq:PU}
  \theta_{i}(y)=\left\{
  \begin{array}{ll}
  \frac{(y-y_{min})-(i-1)h}{h},  &(i-1)h\leq y-y_{min}\leq ih,\\
  \frac{(i+1)h-(y-y_{min})}{h},  &ih\leq y-y_{min}\leq (i+1)h,\\
  0,     &\mbox{other}.\\
  \end{array}
  \right.
 \end{equation}
 Note that each $\theta_i$ is piece-wise linear in the $y$ variable
 and is constant in $x$. Moreover, each $\theta_i$ is supported in the
 $i$-th strip $(i-1)h\leq y-y_{min}\leq (i+1)h$.

Denote the set of triangles in $\Tri{h}$ including nodes in the $i$-th strip by $\Tri{i}$, and Let
\begin{equation*}
\Omega_{i}=\bigcup_{\tau\in \Tri{i}}\tau,
\end{equation*}
\begin{equation*}
V_{i}=\{v\in V_h, supp\ v\subseteq \bar{\Omega}_{i}\},
\end{equation*}
then the two-level method with line smoother $V_{i} (1\leq i\leq L)$ and coarse grid $V_{H}$ can be written as
\begin{equation*}
V_{h}=\sum_{i=1}^{L}V_{i}+V_{H}.
\end{equation*}

Let $K$ be a triangle with vertices $\{(x_i,y_i)\}_{i=1}^3$, which we assume ordered counter-clockwise. For a given edge $E\in \partial K$, with
\[
E=((x_{i},y_{i}),(x_{j},y_{j})), \quad j=1+\operatorname{mod}(i,3),
\]
we denote
\begin{equation}\label{eq:deltaE}
\delta^K_{E}y=\frac{1}{2|K|}(y_{j}-y_{i}),
\end{equation}
we also denote with $(x_{E},y_{E})$ the coordinates of the vertex of $K$ which is opposite to $E$. In another word, if
$E=((x_{i},y_{i}),(x_{j},y_{j})) $ then $(x_{E},y_{E})=(x_k,y_k)$, where $k\neq i$ and $k\neq j$.  Let $v$ be a linear function on $K$. If we
set $v_{E}^{K}=v(x_{E},y_{E})$, then, it is easy to check that
\begin{equation}\label{eq:derivative identity}
\frac{\partial v}{\partial x}\bigg|_{K}=\sum_{E\in
\partial K}(\delta^K_{E}y)v_{E}^{K}.
\end{equation}

\section{Convergence of the two-level method\label{section:Theorem}}

We will first prove that even in the aligned case the most common point-wise smoothers will result in two-level method whose convergence
deteriorates when $\epsilon$ tends to zero in equation~\eqref{equation:model1}. The result is as follows.
\begin{theorem}[Lower bound and deterioration of the convergence rate]\label{lemma: counterexample}
In case of grid aligned anisotropy (i.e. $\omega=0,\frac{\pi}{2}, \frac{3\pi}{4}$) the energy norm of the error propagation operator
corresponding to the two-level iteration with coarse space $V_H$ and point-wise Gauss-Seidel smoother can be bounded below as follows:
\begin{equation}\label{equation: lower bound on E}
\|E_{TL}\|_{a}^{2}\geq 1-C(\epsilon+h^2),
\end{equation}
with constant $C$ independent of $\epsilon$ and $h$.
\end{theorem}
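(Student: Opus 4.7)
My approach is to exhibit a single explicit test function $v\in V_h$ satisfying $\|E_{TL} v\|_a^2\ge(1-C(\epsilon+h^2))\,a(v,v)$; this immediately gives the claimed inequality via $\|E_{TL}\|_a^2\ge\|E_{TL}v\|_a^2/a(v,v)$. The heuristic is the classical failure mode of point-smoothing with standard coarsening on a grid-aligned anisotropic problem: modes that are smooth in the direction of strong coupling (here $x$) and oscillatory at the fine-grid Nyquist frequency in the direction of weak coupling (here $y$) are invisible both to the smoother and to the coarse space. For $\omega=0$ I would take $v\in V_h$ with nodal values
\[
v(x_i,y_j)=\phi(x_i)\,\chi(j),\qquad \phi(x)=\sin\bigl(\pi(x+1)/2\bigr),\qquad \chi(j)=\begin{cases}1,& j\text{ odd},\\ 0,& j\text{ even},\end{cases}
\]
so that $v$ vanishes at every coarse vertex (all coarse vertices lie on even $j$-lines). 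A direct computation using \eqref{eq:derivative identity} yields $a(v,v)\asymp 1+\epsilon/h^2$.

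\textbf{Coarse-correction step.} Writing $E_{TL}=S_{GS}(I-P_H)$ with $P_H$ the $a$-orthogonal projection onto $V_H$, the first ingredient is $\|P_H v\|_a^2\le C_1(\epsilon+h^2)\,a(v,v)$. Since $\|P_H v\|_a=\sup_{w_H\in V_H}a(v,w_H)/\|w_H\|_a$, I would estimate $a(v,w_H)=\int v_x w_{H,x}+\epsilon\int v_y w_{H,y}$ element by element on $\Tri{H}$. On each coarse triangle $w_{H,x}$ and $w_{H,y}$ are constant, while the fine subtriangles carrying $v$ come in pairs related by the Nyquist flip $j\leftrightarrow j\pm 1$; this produces leading-order cancellation in both integrals, with a remainder of order $h$ per coarse element coming from the $O(h)$ variation of $\phi$ across a coarse element and from truncation of $\chi$ in the boundary strip. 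Assembling yields $a(v,w_H)\le C(h+\sqrt{\epsilon})\sqrt{a(v,v)}\,\|w_H\|_a$, which gives the stated bound after squaring.

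\textbf{Smoother step.} The second ingredient is $\|S_{GS}w\|_a^2\ge(1-C_2(\epsilon+h^2))\,a(w,w)$ for $w=(I-P_H)v$. Using the standard convergence identity for point Gauss--Seidel, which expresses $a(w,w)-\|S_{GS}w\|_a^2$ as a weighted sum of squared local residuals $(A_h w,\varphi_i)^2/a_{ii}$ with $\varphi_i$ the fine-grid nodal basis, I would bound the fine-grid residuals directly. Substituting $v$ into the anisotropic five-point stencil, the $x$-contribution reduces to a second $x$-difference of the smooth profile $\phi$ (hence $O(h^2)$ per node), and the $y$-contribution carries the overall prefactor $\epsilon$ regardless of Nyquist roughness in $y$ (hence $O(\epsilon)$ per node). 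Summing over the $O(h^{-2})$ interior nodes and using $a_{ii}\asymp 1+\epsilon$ yields the stated estimate; the replacement of $v$ by $w=(I-P_H)v$ costs only the already-controlled error from the previous step via $\|S_{GS}\|_a\le 1$ and the triangle inequality.

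\textbf{Main obstacle.} Combining the two ingredients,
\[
\|E_{TL}v\|_a^2=\|S_{GS}(I-P_H)v\|_a^2\ge(1-C_2(\epsilon+h^2))\,\|(I-P_H)v\|_a^2\ge(1-C(\epsilon+h^2))\,a(v,v),
\]
and dividing by $a(v,v)$ gives \eqref{equation: lower bound on E}. I expect the coarse-correction step to be the main obstacle: $P_H$ is the $a$-orthogonal projection, \emph{not} the nodal projection $I_H$, so the nodal vanishing of $v$ at coarse vertices must be converted into energy smallness of $a(v,w_H)$ by a careful element-level cancellation argument exploiting the Nyquist structure of $\chi$, and this is precisely where the $h^2$ term (rather than just $\epsilon$) in the final bound is produced.
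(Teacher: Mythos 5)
Your overall strategy (exhibit one explicit $v\in V_h$ and estimate the coarse correction and the smoother separately) is different from the paper's, which never touches the $a$-orthogonal projection $P_H$ at all: the paper invokes the two-level identity of Lemma~\ref{lemma: two-level by ludmil} together with the bound $K\gtrsim h^{-2}\sup_{v}\|(I-Q_H)v\|^2/\|v\|_a^2$ ($Q_H$ the $L^2$ projection), so that only $L^2$ quantities of a test function need to be controlled, and it uses a test function supported in a single strip of width $2h$ (a hat in $x$ living on the one fine line $y=h$, zero at every other vertex). A direct route like yours could in principle work, but your key step fails for the test function you chose.

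The gap is in the coarse-correction step, and it is not merely technical: the claim $\|P_Hv\|_a^2\le C_1(\epsilon+h^2)\,a(v,v)$ is false for your $v$. Write $\chi(j)=\tfrac12\bigl(1+(-1)^{j+1}\bigr)$, so that $v=\tfrac12 I_h\bigl(\phi(x)\bigr)+(\text{Nyquist part})$ at the nodes. The smooth half is visible to the coarse space in the \emph{dominant} part of the energy: take $w_H=\tfrac12 I_H\phi\in V_H$, which on this structured mesh depends on $x$ only, so $\partial_y w_H=0$ and $\|w_H\|_a^2=\|\partial_x w_H\|_0^2\simeq 1$. Since $\partial_x v\approx\phi'$ on half of the fine triangles and $0$ on the other half, $a(v,w_H)=\int \partial_x v\,\partial_x w_H\simeq\tfrac14\|\phi'\|_0^2\simeq 1$, hence $\|P_Hv\|_a\ge a(v,w_H)/\|w_H\|_a\gtrsim 1$. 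Your claimed bound would force $\|P_Hv\|_a^2\lesssim(\epsilon+h^2)(1+\epsilon/h^2)$, which tends to $0$ when $\epsilon\lesssim h^2$ — precisely the regime where the theorem has content. Worse, since $\|E_{TL}v\|_a\le\|(I-P_H)v\|_a$ and $a(v,v)\simeq 1+\epsilon/h^2\simeq 1$ for $\epsilon\lesssim h^2$, one gets $\|E_{TL}v\|_a^2\le a(v,v)-\|P_Hv\|_a^2\le(1-c)\,a(v,v)$ with a fixed $c>0$, so this particular $v$ provably cannot yield the bound $1-C(\epsilon+h^2)$: the coarse grid removes an $O(1)$ fraction of its energy, because the mode is global in $x$ and its $y$-average carries order-one $x$-energy. (Your smoother heuristic, by contrast, is essentially sound.) The paper's choice avoids exactly this: its $v_0$ is confined to one non-coarse line, vanishes under $I_H$, and the argument converts that nodal vanishing into an $L^2$ lower bound $\|(I-Q_H)v_0\|\gtrsim\|v_0\|$ via $L^2$-stability of $I_H$, then feeds $\|v_0\|^2\simeq h$ and $\|v_0\|_a^2\simeq h+\epsilon/h$ into the identity — no estimate of $a(v_0,w_H)$ against arbitrary $w_H$ is ever needed. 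To rescue your direct approach you would have to both localize the test function (e.g.\ replace the global $\sin$ profile by the paper's strip-supported hat) and then actually prove the near-$a$-orthogonality to $V_H$, which is the hard cancellation step you correctly anticipated but did not supply.
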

This result follows from the following two-level convergence identity (proof can be found in \cite[Lemma~2.3]{Zikatanov2008}):
\begin{lemma}\label{lemma: two-level by ludmil}
The following relation holds for the two-level error propagation operator $E_{TL}=(I-T)(I-P_{H})$:
\begin{equation}
\|E_{TL}\|_{a}^{2}=1-\frac{1}{K}\quad \mbox{where}\quad K=\sup_{v\in V}\frac{\|(I-\Pi_{*})v\|_{*}^{2}}{\|v\|_{a}^{2}},
\end{equation}
where $\|v\|_{*}^{2}=\inf\limits_{\sum_{i}v_{i}=v}\sum\limits_{k}\|v_{k}\|_{a}^{2}$ and $\Pi_{*}$ is an $(\cdot,\cdot)_{*}$-orthogonal
projection on $V_{H}$.
\end{lemma}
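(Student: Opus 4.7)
The plan is to derive the identity in two stages: first reduce the operator norm of $E_{TL}$ to a supremum over the $a$-orthogonal complement of $V_H$, and then apply the Xu--Zikatanov subspace-correction identity, converting its infimum over decompositions into the stated $(\cdot,\cdot)_*$-projection form.

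The first stage exploits that $P_H$ is $a$-orthogonal, so every $u \in V_h$ splits as $u = P_H u + (I-P_H)u$ with $\|u\|_a^2 = \|P_H u\|_a^2 + \|(I-P_H)u\|_a^2$. Since $E_{TL} u = (I-T)(I-P_H)u$ sees only the second summand, the Rayleigh quotient $\|E_{TL}u\|_a^2/\|u\|_a^2$ is maximized on the coarse-free slice $P_H u = 0$, giving
\begin{equation*}
\|E_{TL}\|_a^2 = \sup_{v \in (I-P_H)V_h,\ v \neq 0} \frac{\|(I-T)v\|_a^2}{\|v\|_a^2}.
\end{equation*}
This step is structural and reduces the problem to understanding the smoother on the complement of the coarse space.

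The second stage views the full two-level iteration as symmetric subspace correction on the augmented decomposition $V_h = V_H + V_1 + \dots + V_L$, in which the coarse solve is handled exactly and the relaxation is performed over the $V_i$. The Xu--Zikatanov identity then gives $\|E_{TL}\|_a^2 = 1 - 1/K_0$, where $K_0$ is the supremum over $v$ of a quotient whose numerator is an infimum of the form $\|v_H\|_a^2 + \sum_i \|v_i\|_a^2$ taken over decompositions $v = v_H + \sum_i v_i$ with $v_H \in V_H$, $v_i \in V_i$. Carrying out the inner infimum in two passes converts $K_0$ into $K$: for each fixed $v_H$, the infimum of $\sum_i \|v_i\|_a^2$ subject to $v - v_H = \sum_i v_i$ is exactly $\|v - v_H\|_*^2$ by the definition of $\|\cdot\|_*$; the remaining outer infimum $\inf_{v_H \in V_H} \|v - v_H\|_*^2$ is a best-approximation problem in the $(\cdot,\cdot)_*$ inner product, whose unique solution is $v_H = \Pi_* v$ with optimal value $\|(I - \Pi_*)v\|_*^2$. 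Substitution produces $K = \sup_v \|(I - \Pi_*)v\|_*^2/\|v\|_a^2$, as claimed.

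The main obstacle will be the second stage: matching the non-symmetric product $(I-T)(I-P_H)$ to the symmetric-looking Xu--Zikatanov identity so that \emph{equality} (not merely an inequality) holds. One must verify that the product iteration is genuinely equivalent to symmetric SSC on the augmented decomposition and that the coarse contribution separates cleanly enough for the infimum to split into an inner part over $\{v_i\}$ and an outer part over $v_H$. A secondary technical point is the justification, via polarization, that $\|\cdot\|_*$ induces an honest inner product on $V_h$, so that $\Pi_*$ makes sense as an orthogonal projection; this requires $\|\cdot\|_*$ to be a norm rather than only a seminorm, which follows from the fact that $V_H + \sum_i V_i$ spans $V_h$.
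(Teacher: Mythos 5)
Your Stage 1 reduction is correct, and your overall plan --- specialize a subspace-correction identity to the augmented decomposition $V_h=V_H+V_1+\cdots+V_L$ and then optimize out the coarse component --- is in fact the route taken in the reference the paper points to for this statement (the paper itself gives no proof; it cites \cite[Lemma~2.3]{Zikatanov2008}). But your Stage 2 misquotes the Xu--Zikatanov identity in a way that is fatal to an \emph{equality}. For the successive (multiplicative) method with exact subspace solves the identity reads, in its standard equivalent form,
\[
\|E\|_{a}^{2}\;=\;1-\Bigl(\,\sup_{\|v\|_{a}=1}\;\inf_{\sum_{i}v_{i}=v}\;\sum_{i}\Bigl\|P_{i}\sum_{j\ge i}v_{j}\Bigr\|_{a}^{2}\Bigr)^{-1},
\]
and the cross terms $P_{i}\sum_{j>i}v_{j}$ are the whole point. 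The plain additive numerator $\|v_H\|_a^2+\sum_i\|v_i\|_a^2$ that you use is the parallel-subspace-correction (condition number) quantity; it agrees with the multiplicative one only up to constants depending on the overlap of the strips, so with it you can prove two-sided bounds but never the stated identity. You yourself flag ``matching the non-symmetric product to the symmetric-looking identity so that equality holds'' as the main obstacle --- but resolving that obstacle \emph{is} the lemma, so as written the proof is missing its core step.

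There is also an algebraic slip that is independent of the above: even granting your $K_0$, the two-pass minimization yields $\inf_{v_H}\bigl(\|v_H\|_a^2+\|v-v_H\|_*^2\bigr)$, not $\inf_{v_H}\|v-v_H\|_*^2=\|(I-\Pi_*)v\|_*^2$; the term $\|v_H\|_a^2$ does not disappear, and you drop it silently. In the correct derivation one orders the coarse solve first, so that exactness of the coarse solve makes its term $\|P_H(v_H+\sum_i v_i)\|_a^2=\|P_H v\|_a^2$, independent of the decomposition; the infimum over $v_H$ then acts only through the constraint $\sum_i v_i=v-v_H$ and produces $\inf_{v_H}\|v-v_H\|_*^2=\|(I-\Pi_*)v\|_*^2$, where now $\|w\|_*^2=(\bar R^{-1}w,w)$ is the norm induced by the \emph{symmetrized smoother} (this equals your additive expression exactly only for an additive smoother; for line Gauss--Seidel it is merely uniformly equivalent --- enough for the paper's application of the lemma, but another place where an ``$=$'' in your write-up is really a ``$\simeq$''). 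The leftover $\|P_Hv\|_a^2$ is then removed precisely by your Stage 1, which you state but never use: since $\|(I-\Pi_*)(v+z)\|_*=\|(I-\Pi_*)v\|_*$ for all $z\in V_H$, the supremum defining $K$ may be restricted to $v\perp_a V_H$, where $P_Hv=0$; making this restriction legitimate uses $\|w\|_*\ge\|w\|_a$, i.e., convergence of the smoother. Repair Stage 2 along these lines (or simply invoke \cite[Lemma~2.3]{Zikatanov2008}, as the paper does) and the argument closes.
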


\begin{proof}[Proof of Theorem~\ref{lemma: counterexample}]
From the Lemma~\ref{lemma: two-level by ludmil} we can immediately see that to prove the estimate~\eqref{equation: lower bound on E} we need to
show that
\begin{equation}
K=\sup_{v\in V_h}\frac{\|(I-\Pi_{*})v\|_{*}^{2}}{\|v\|_{a}^{2}}\gtrsim \frac{1}{\epsilon+h^2},
\end{equation}
here the quantity $K$ is the same as in Lemma~\ref{lemma: two-level by ludmil}.
 From the proof of \cite[Theorem 4.5]{Zikatanov2008}, we also know
 that
\begin{equation}
K \gtrsim h^{-2} \sup_{v\in V_h}\frac{\|(I-Q_H)v\|^{2}}{\|v\|_{a}^{2}},
\end{equation}
where $Q_H$ is the $(\cdot,\cdot)$ orthogonal projection on $V_H$. In the case of angle of rotation $\omega=0$, the computational domain is
$\Omega=\Omega_0=(-1,1)^2$. We assume $h=1/n$ ($n$ is even) and consider the $2n\times 2n$ partition with vertices $(x_j,y_k)$, $x_j=jh$ and
$y_k=kh$, $j=-n,\cdots,n, k=-n,\cdots,n$, then the corresponding coarse gird is with vertices $(x_{2j},y_{2k})$, $j=-n/2,\cdots,n/2$,
$k=-n/2,\cdots,n/2$.

For any given $v\in V_h$, since
$$\|v\|^2\simeq h^2
\sum\limits_{j=-n}^n\sum\limits_{k=-n}^nv^2(x_j,y_k),
$$
and
$$\|I_Hv\|^2\simeq H^2
\sum\limits_{j=-n/2}^{n/2}\sum\limits_{k=-n/2}^{n/2}v^2(x_{2j},y_{2k}),
$$
then the interpolation $I_H$ is stable in the $L_2$ norm, i.e. $\|I_Hv\|\lesssim\|v\|$, for all $v\in V_h$, provided that $\frac{H}{h}\lesssim
1$. Now consider a function $v_0\in V_h$, supported in the closure of $(-1,1)\times(0,2h)$ and defined as
$$
v_0(x_j,y_{1})=v_0(x_j,h)=1-|j|h, j=-n,\cdots,n,
$$
and $v_0$ is 0 at any other vertex. Note that
\[
I_H v_0 = 0.
\]

From the stability of $I_H$ in the $L_2$ norm, which we have just shown we get:
\begin{eqnarray*}
\|I_H(I-Q_H)v_0\|^2 & \lesssim & \|(I-Q_H)v_0\|^2,\\
\|(I-I_H)(I-Q_H)v_0\|^2 &\lesssim&  \|(I-Q_H)v_0\|^2.
\end{eqnarray*}
Using these estimates and the fact that $I_HQ_H=Q_H$ then gives
\begin{eqnarray*}
\|(I-Q_H)v_0\|^{2} &\gtrsim &
\|I_H(I-Q_H)v_0\|^2+\|(I-I_H)(I-Q_H)v_0\|^2\\
&= &\|(I_H-Q_H)v_0\|^2+\|(I-I_H)v_0\|^2\\
&= &\|Q_Hv_0\|^2+\|v_0\|^2\\
&\geq &\|v_0\|^2.
\end{eqnarray*}
So
\begin{equation*}
K \gtrsim h^{-2} \sup_{v\in V_h}\frac{\|(I-Q_H)v\|^{2}}{\|v\|_{a}^{2}}\gtrsim h^{-2} \frac{\|(I-Q_H)v_0\|^{2}}{\|v_0\|_{a}^{2}}\gtrsim
h^{-2}\frac{\|v_0\|^{2}}{\|v_0\|_{a}^{2}}.
\end{equation*}
Since
\begin{equation*}
\|v_0\|^{2}\simeq h^2\sum_{j=-n}^n v_0^2(x_j,h)\simeq h^2\sum_{j=0}^n (1-jh)^2=h^4\sum_{j=0}^n j^2\simeq h^4n^3\simeq h,
\end{equation*}
and
\begin{equation*}
\|v_0\|_a^{2}=\|\partial_x v_0\|^2+\epsilon \|\partial_y v_0\|^2\simeq h+\epsilon/h,
\end{equation*}
then
\begin{equation*}
K \gtrsim h^{-2}\frac{\|v_0\|^{2}}{\|v_0\|_{a}^{2}} \gtrsim h^{-2}\cdot h\cdot \frac{h}{\epsilon+h^2}=\frac{1}{\epsilon+h^2}.
\end{equation*}
\end{proof}

The above results show that in case of a grid that is \emph{aligned} with the anisotropy direction the convergence of a standard two-level
method (point-wise smoother and standard coarsening) will deteriorate. One easily sees that for $\epsilon \le h^2$ we get a poor convergence
rate (no better than $1-\mathcal{O}(h^2)$).

However, the next result shows that when the grid is not aligned with the anisotropy direction (e.g., angle of rotation $\omega=\pi/4)$ the
lower bound given in Lemma~\ref{lemma:
  counterexample} does not apply and the standard two-level method is
uniformly convergent in this case.

\begin{lemma}
Assume that $\Omega$ is obtained from $\Omega_0$ by a rotation with angle of rotation $\omega=\frac{\pi}{4}$. Then the error propagation
operator corresponding to the two-level iteration with coarse space $V_H$ and point-wise Gauss-Seidel smoother is a uniform contraction in the
energy norm. In fact, we have the estimate
\begin{equation}
K=\sup_{v\in V_h}\frac{\|(I-\Pi_{*})v\|_{*}^{2}}{\|v\|_{a}^{2}}\leq C,
\end{equation}
and so
\begin{equation}
\|E_{TL}\|_{a}^{2}\leq 1-\frac{1}{C},
\end{equation}
with constant $C$ independent of $\epsilon$ and $h$.
\end{lemma}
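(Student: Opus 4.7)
The plan is to invoke the two-level identity of Lemma~\ref{lemma: two-level by ludmil} and establish a uniform upper bound $K \le C$. Since $\Pi_*$ is the $(\cdot,\cdot)_*$-orthogonal projection onto $V_H$, it suffices to construct, for each $v \in V_h$, some $v_H \in V_H$ with $\|v-v_H\|_*^2 \le C\|v\|_a^2$, uniformly in $\epsilon$ and $h$.

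For point-wise Gauss--Seidel the local spaces $V_i = \operatorname{span}(\phi_i)$ are one-dimensional and the fine nodal basis is unique, so
\[
\|w\|_*^2 = \sum_i w(x_i)^2\,\|\phi_i\|_a^2 .
\]
A direct computation on rotated triangles gives $\|\phi_i\|_a^2 \lesssim 1+\epsilon$, and the standard equivalence $\sum_i w(x_i)^2 \simeq h^{-2}\|w\|^2$ on $V_h$ then yields $\|w\|_*^2 \lesssim h^{-2}\|w\|^2$. Hence the target estimate reduces to the sharp $L^2$-approximation bound
\[
\|v - v_H\|^2 \lesssim h^2\|v\|_a^2
\]
for a suitable $v_H \in V_H$. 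The natural candidate is $v_H = I_H v$, so that $(v - v_H)(x_i)$ vanishes at every coarse node and, at a fine-only node (midpoint of some coarse edge $e$), equals $h/2$ times the jump of the tangential derivative of $v$ across $e$. Using the derivative identity~\eqref{eq:derivative identity} together with the explicit values of $\delta^K_E y$ on the rotated sub-triangles, each such tangential derivative can be written as a weighted combination $\alpha_E\,\partial_x v + \beta_E\,\partial_y v$ with coefficients determined by the edge direction.

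The decisive geometric fact---and the reason the rotated case avoids the counterexample of Theorem~\ref{lemma: counterexample}---is that for $\omega = \pi/4$ every edge of the rotated coarse mesh has its $x$- and $y$-components of the same magnitude, so that $|\alpha_E| \simeq |\beta_E|$ on every edge and no tangential derivative isolates $\partial_y v$ from $\partial_x v$. A direct manifestation of the same phenomenon is that a finite element function of the form $v = I_h[g(y)]$ on the $\pi/4$-rotated mesh necessarily satisfies $\partial_x v \sim h\,|g''|$ triangle-wise, so the contribution $\|\partial_x v\|^2$ to $\|v\|_a^2$ cannot vanish even for non-affine $g$. Consequently each tangential slope-jump squared is absorbed by $\|\partial_x v\|^2 + \epsilon\|\partial_y v\|^2$ with a constant depending only on the rotation angle, rather than by $\epsilon^{-1}\|v\|_a^2$.

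The main obstacle is carrying out this element-by-element estimate uniformly: one must quantify the slope-jump bound on each coarse triangle using the rotated geometry, sum consistently over $\Omega$, and conclude $\|v - I_H v\|^2 \lesssim h^2\|v\|_a^2$ with a constant that genuinely depends only on $\omega = \pi/4$ and the shape regularity of the triangulation. Combining this sharp $L^2$-estimate with the reduction above then yields $K \le C$ and hence $\|E_{TL}\|_a^2 \le 1 - 1/C$ as claimed.
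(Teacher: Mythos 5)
Your reduction is essentially the right (and in fact unavoidable) one: for point Gauss--Seidel the subspaces are one-dimensional, so the decomposition is unique, $\|w\|_*^2=\sum_i w(x_i)^2\|\phi_i\|_a^2\simeq h^{-2}\|w\|_0^2$ on the $\pi/4$-rotated mesh (since $\|\partial_x\phi_i\|_{0}^2\simeq 1$ there), and with Lemma~\ref{lemma: two-level by ludmil} the claim becomes equivalent to exhibiting $v_H\in V_H$ with $h^{-2}\|v-v_H\|_0^2\lesssim \|\partial_x v\|_0^2+\epsilon\|\partial_y v\|_0^2$. But this is precisely the estimate that separates $\omega=\pi/4$ from the aligned cases (Theorem~\ref{lemma: counterexample} shows it fails for $\omega=0$), and your proposal does not prove it: you yourself defer the element-by-element bound as ``the main obstacle,'' so the decisive step is missing. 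Note that the paper's own treatment of this lemma is also only a pointer --- it invokes the argument of Theorem~\ref{theorem: the main theorem}, where the $\epsilon$-weighted terms are absorbed through the $L^2$ estimate of Lemma~\ref{lemma: two level convergence} and the $\partial_x$ terms through fine-grid stability estimates --- but that route likewise hinges on a $\pi/4$-specific bound of the $h^{-2}\|v-I_Hv\|_0^2$ (or nodal $\partial_x$) type, which your write-up does not supply.

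Moreover, the heuristic offered in place of that step rests on an incorrect geometric claim. The mesh consists of squares cut by diagonals, and under the rotation by $\pi/4$ the diagonal edges become parallel to the $y$-axis (this is exactly why $\omega=3\pi/4$, where they become parallel to the $x$-axis, is listed among the aligned cases). So it is not true that every edge has $x$- and $y$-components of comparable magnitude: a third of the edges have zero $x$-component, and along those the tangential derivative is exactly $\partial_y v$, controlled only by $\epsilon^{-1/2}\|v\|_a$. Even on the $45^\circ/135^\circ$ edges, the observation $|\alpha_E|\simeq|\beta_E|$ does not give the bound, since estimating the tangential slope jump by $|\partial_x v|+|\partial_y v|$ reintroduces the uncontrolled $\partial_y v$. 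What actually rescues the $\pi/4$ case is a cancellation at the level of second differences, which your remark about $I_h[g(y)]$ gestures at but does not develop: by \eqref{eq:derivative identity}, on a fine triangle whose vertical edge has endpoints $A,C$ and opposite vertex $B$, $\partial_x v$ equals, up to a factor of order $h^{-1}$, the second difference $v_A+v_C-2v_B$; consequently the midpoint defect $(v-I_Hv)(M)$ on each coarse edge can be written as an $O(h)$ combination of $\partial_x v$ over a fixed patch of neighboring fine triangles, and summing these identities (for all three edge directions) is what yields $\|v-I_Hv\|_0^2\lesssim h^2\|v\|_a^2$ uniformly in $\epsilon$. Until that computation is carried out, the proposal identifies the correct target inequality but does not establish it, and its stated justification is false as given.
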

\begin{proof}
The proof follows the same lines as the proof of Theorem \ref{theorem: the main theorem}, which will be given in the
Section~\ref{section:Proof}.
\end{proof}

Let us remark here that for decreasing values of the angle of rotation (i.e. decreasing $\omega$ from $\pi/4$ to $0$) the convergence rate
$\|E_{TL}\|_a$ of a two-level method with point-wise smoother deteriorates as the angle of rotation becomes smaller.

From the above considerations, it is clear that even in the case of aligned anisotropy one needs to use a special smoother or coarsening
strategy in order to achieve uniform convergence. Our analysis shows that the line smoother or more generally a block smoother with blocks
consisting of degrees of freedom along  the anisotropy results in a uniformly convergent method. The Theorem below provides a uniform estimate
on the convergence rate of the error propagation operator and is the main result in this paper.
\begin{theorem}\label{theorem: the main theorem}
For any angle of rotation $\omega\in [0,\pi]$, the two-level iteration with coarse space $V_H$ and line (block) Gauss-Seidel smoother is a
uniformly convergent method. In fact, we have
\begin{equation}\label{eq:convergence rate}
 \|E_{TL}\|_{a}^{2}\leq 1-\frac{1}{C},
 \end{equation}
with constant $C$ independent of $\epsilon$ and $h$.
\end{theorem}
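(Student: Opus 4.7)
The strategy is to apply the two-level convergence identity (Lemma~\ref{lemma: two-level by ludmil}) and reduce the theorem to proving a uniform upper bound of the form $K\le C$, independent of $\epsilon$, $h$ and the angle of rotation $\omega$. Because $\Pi_*$ realizes the infimum among coarse corrections in the $(\cdot,\cdot)_*$-geometry, it suffices to exhibit, for every $v\in V_h$, a concrete decomposition $v=v_H+\sum_{i=1}^{L}v_i$ with $v_H\in V_H$, $v_i\in V_i$, and
\begin{equation*}
  \sum_{i=1}^{L}\|v_i\|_a^{2}\le C\,\|v\|_a^{2}.
\end{equation*}

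For the decomposition I would use the partition of unity $\{\theta_i\}$ from Section~\ref{section:Preliminaries}. Set $v_H=I_H v$ and
\begin{equation*}
  v_i = I_h\bigl(\theta_i(v-I_H v)\bigr).
\end{equation*}
Since $\sum_i\theta_i\equiv 1$ on $\Omega$ and $I_h$ reproduces $V_h$, linearity gives $\sum_i v_i=v-I_H v$, so the decomposition is exact. As $\theta_i$ vanishes at all nodes outside the $i$-th strip, the support of $v_i$ is contained in $\bar\Omega_i$, hence $v_i\in V_i$.

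The core estimate to be established is then
\begin{equation*}
 \sum_{i=1}^{L}\bigl\|I_h(\theta_i w)\bigr\|_a^{2}\le C\,\|v\|_a^{2},\qquad w:=v-I_H v,
\end{equation*}
which I would split into two pieces that correspond to Sections~\ref{section:StabilityC} and~\ref{section:StabilityF}. First, coarse-grid stability and anisotropic approximation: I would prove $\|w\|_a\le C\,\|v\|_a$ together with an $L^2$-type estimate of the form $\epsilon h^{-2}\|w\|_{L^2}^{2}\le C\,\|v\|_a^{2}$, the latter being robust in $\omega$. Second, fine-grid localization: I would prove $\sum_i\|I_h(\theta_i w)\|_a^{2}\le C\bigl(\|w\|_a^{2}+\epsilon h^{-2}\|w\|_{L^2}^{2}\bigr)$. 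The key tool for the second step is the triangle-wise identity \eqref{eq:derivative identity}, which lets me expand $\partial_x(I_h(\theta_i w))|_K$ as a weighted sum of nodal values with the edge coefficients $\delta_E^K y$, so that the effect of replacing $\theta_i w$ by its interpolant can be estimated edge by edge. Note that the overlap between neighboring $V_i$ is only one strip wide, so the number of nonzero terms at any point is bounded by a fixed constant, making the summation over $i$ harmless.

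The main obstacle is the fine-grid stability estimate in the non-aligned regime. Because $\theta_i$ is piecewise linear in $y$ with slope of order $h^{-1}$, the product $\theta_i w$ picks up a term of size $h^{-1}$ in $\partial_y$; this term is harmless in the energy norm only because it carries the factor $\epsilon$, which must then be absorbed into the anisotropic approximation estimate $\epsilon h^{-2}\|w\|_{L^2}^{2}\le C\,\|v\|_a^{2}$. On a rotated mesh, however, $\theta_i w$ is not piecewise linear in the mesh coordinates, and the nodal interpolation $I_h$ mixes $x$ and $y$ information in a manner that depends on $\omega$; one must verify, using \eqref{eq:derivative identity} triangle by triangle and treating the boundary layer of each strip separately, that the resulting bound does not degenerate as $\omega\to 0$ (i.e.\ as the grid becomes aligned) nor as $\omega\to \pi/4$ (maximally skewed). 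Once the coarse-grid stability of Section~\ref{section:StabilityC} and the fine-grid stability of Section~\ref{section:StabilityF} are in hand, their combination yields $\sum_i\|v_i\|_a^{2}\le C\,\|v\|_a^{2}$, so $K\le C$, and Lemma~\ref{lemma: two-level by ludmil} then gives \eqref{eq:convergence rate}.
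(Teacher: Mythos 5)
Your proposal follows essentially the same route as the paper: decompose $w=v-I_Hv$ via $w_i=I_h(\theta_i w)$, bound $\|\cdot\|_*$ by this concrete decomposition, control $\sum_i\|\partial_x w_i\|^2$ through the fine-grid stability of Section~\ref{section:StabilityF} (using \eqref{eq:derivative identity} and the fact that $I_Hw=0$), handle the $y$-derivative by an inverse inequality plus the anisotropic term $\epsilon h^{-2}\|w\|^2$, and absorb everything with the coarse-grid estimates of Section~\ref{section:StabilityC} before invoking Lemma~\ref{lemma: two-level by ludmil}. This matches the paper's proof, so the proposal is correct.
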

The proof of this theorem is postponed to Section~\ref{section:Proof}. The result follows from the stability and interpolation estimates that
are given in Section~\ref{section:StabilityC} and Section~\ref{section:StabilityF} and Lemma~\ref{lemma: two-level by ludmil}.

\section{Stability of the coarse grid interpolant\label{section:StabilityC}}
In this section we prove the stability of the coarse grid interpolant.

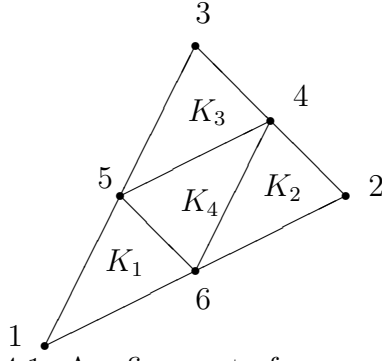
\begin{figure}[!h]
 \begin{center}
 \setlength{\unitlength}{1.0cm}
 \begin{picture}(5,5)
 \put(0,0){\line(1,2){2}}
 \put(0,0){\line(2,1){4}}
 \put(4,2){\line(-1,1){2}}
 \put(1,2){\line(2,1){2}}
 \put(1,2){\line(1,-1){1}}
 \put(2,1){\line(1,2){1}}

 \put(0,0){\circle*{0.1}}\put(-0.5,0){1}
 \put(4,2){\circle*{0.1}}\put(4.3,2){2}
 \put(2,4){\circle*{0.1}}\put(2,4.3){3}
 \put(3,3){\circle*{0.1}}\put(3.3,3.2){4}
 \put(1,2){\circle*{0.1}}\put(0.7,2.1){5}
 \put(2,1){\circle*{0.1}}\put(2,0.5){6}

 \put(0.8,1){$K_{1}$}
 \put(2.9,2){$K_{2}$}
 \put(1.9,3){$K_{3}$}
 \put(1.8,1.8){$K_{4}$}

 \end{picture}
 \end{center}
 \caption{A refinement of a coarse element $K$
 \label{fig:refinement}}
 \end{figure}

 In what follows, given a triangle $K\in \Tri{H}$ ($K=\bigcup\limits_{l=1}^4K_l$ with
$K_l\in\Tri{h}$) as shown in the
 Figure \ref{fig:refinement}, we shall frequently use the following equalities
 \begin{equation}\label{eq:triangle equalities}
 \begin{array}{l}
 y_{2}-y_{6}=y_{6}-y_{1}=y_{4}-y_{5},\\
 y_{3}-y_{4}=y_{4}-y_{2}=y_{5}-y_{6},\\
 y_{1}-y_{5}=y_{5}-y_{1}=y_{6}-y_{4},\\
 |K|=4|K_{l}|,\quad l=1\ldots 4.
 \end{array}
 \end{equation}

 \begin{proposition}\label{proposition: relations}
  For any $v\in V_h$, we have the following relation:
\begin{equation}\label{eq:identity}
\frac{\partial (I_{H}v)}{\partial x}\bigg|_{K}=\frac{1}{2}\left(\sum_{l=1}^{3}\frac{\partial v}{\partial x}\bigg|_{K_{l}}-\frac{\partial
v}{\partial x}\bigg|_{K_{4}}\right).
\end{equation}
 \end{proposition}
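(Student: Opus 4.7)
The plan is to verify the identity by computing both sides of \eqref{eq:identity} using the per-triangle derivative formula \eqref{eq:derivative identity} and showing they agree.

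First, I would observe that because $I_H v$ is linear on $K$ and coincides with $v$ at the coarse vertices $1,2,3$, applying \eqref{eq:derivative identity} directly on $K$ gives
\[
\frac{\partial (I_H v)}{\partial x}\bigg|_K = \frac{1}{2|K|}\Big[(y_3-y_2)v(1)+(y_1-y_3)v(2)+(y_2-y_1)v(3)\Big],
\]
which depends only on the coarse nodal values. So the task reduces to showing that the right-hand side of \eqref{eq:identity}, which a priori involves the midpoint values $v(4),v(5),v(6)$ as well, simplifies to the same expression.

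Next, I would apply \eqref{eq:derivative identity} separately on each of the four subtriangles, using $|K_l|=|K|/4$ so that each prefactor becomes $2/|K|$. For the three corner triangles $K_1=(1,6,5)$, $K_2=(6,2,4)$, $K_3=(5,4,3)$ (counter-clockwise), I would write out the nine edge contributions and then group them by vertex. Using the midpoint relations $y_4=(y_2+y_3)/2$, $y_5=(y_1+y_3)/2$, $y_6=(y_1+y_2)/2$ (equivalent to the equalities in \eqref{eq:triangle equalities}), the coefficients of $v(1),v(2),v(3)$ in $\sum_{l=1}^3 \partial_x v|_{K_l}$ collapse to $(y_3-y_2)/|K|$, $(y_1-y_3)/|K|$, $(y_2-y_1)/|K|$, while those of the midpoint values $v(4),v(5),v(6)$ come out as $(y_2-y_3)/|K|$, $(y_3-y_1)/|K|$, $(y_1-y_2)/|K|$ respectively.

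For the central subtriangle $K_4$, after fixing the counter-clockwise ordering (which, as a quick check on coordinates confirms, is $(4,5,6)$), the same reduction via the midpoint formulas yields exactly $\partial_x v|_{K_4}=\frac{1}{|K|}[(y_1-y_2)v(6)+(y_2-y_3)v(4)+(y_3-y_1)v(5)]$. Subtracting this from the previous sum makes the midpoint contributions cancel in pairs, and what remains is $\frac{1}{|K|}[(y_3-y_2)v(1)+(y_1-y_3)v(2)+(y_2-y_1)v(3)]$, which is precisely twice the coarse-grid expression above. Dividing by $2$ gives \eqref{eq:identity}.

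The only real obstacle is bookkeeping: one must be careful about the orientation chosen for each subtriangle so that the edge orientations in \eqref{eq:deltaE} are consistent, and in particular one must notice that the minus sign in front of $\partial_x v|_{K_4}$ is exactly what is needed to cancel the midpoint values (their coefficients from the three corner triangles and from $K_4$ are negatives of each other). Once the orientations are fixed and the midpoint identities are substituted, the computation is purely algebraic.
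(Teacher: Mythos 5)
Your computation is correct, and it follows the same route as the paper: apply \eqref{eq:derivative identity} to $K$ (for $I_Hv$) and to each subtriangle $K_l$ (for $v$), then use the midpoint relations \eqref{eq:triangle equalities} so that the midpoint values cancel against the $-\partial_x v|_{K_4}$ term. You have simply written out explicitly the bookkeeping that the paper leaves as ``immediate to verify.''
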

 \begin{proof}
 From \eqref{eq:derivative identity} with $I_{H}v$ instead of $v$, we have
 \begin{equation*} 
 \frac{\partial (I_{H}v)}{\partial   x}\bigg|_{K}=\sum_{E\in \partial K} (\delta_E^K y)
 v_{E}^{K},
 \end{equation*}
 and from \eqref{eq:derivative identity} with $K_l$ instead of $K$ for $l=1,\ldots,4$, we have
  \begin{equation*} 
 \frac{\partial v}{\partial x}\bigg|_{K_{l}}
=\sum_{E\in \partial K_l} (\delta_E^{K_l} y)v_{E}^{K_{l}}.
\end{equation*}
 Combine the above two equations and \eqref{eq:triangle equalities}, it is immediate to verify
 the result.
 \end{proof}

 We are now ready to prove our first stability estimate. Since we have
 anisotropic diffusion problem in hand, we need to estimate separately
 $\left\|\frac{\partial (I_{H}v)}{\partial x}\right\|_{0}$ and
 $\left\|\frac{\partial (I_{H}v)}{\partial y}\right\|_{0}$, which is
 done in the next Lemma.
\begin{lemma}\label{lemma: two-level}
 For any $v\in V_{h}$,
 we have $\left\|\frac{\partial (I_{H}v)}{\partial x}\right\|_{0}^{2}\leq 4 \|\frac{\partial v}{\partial
 x}\|_{0}^{2}$, and $\left \|\frac{\partial (I_{H}v)}{\partial y}\right\|_{0}^{2}\leq 4 \|\frac{\partial v}{\partial
 y}\|_{0}^{2}.$
\end{lemma}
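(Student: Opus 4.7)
The plan is to reduce the global estimate to a purely local one on each coarse triangle $K\in\Tri{H}$, by exploiting the fact that $\partial_x(I_Hv)$ is constant on $K$ together with the identity~\eqref{eq:identity} from Proposition~\ref{proposition: relations}. Writing $a_l=\partial_x v|_{K_l}$, Proposition~\ref{proposition: relations} gives $\partial_x(I_Hv)|_K=\tfrac12(a_1+a_2+a_3-a_4)$, and a discrete Cauchy--Schwarz inequality yields
\[
\bigl(\partial_x(I_Hv)|_K\bigr)^{2}=\tfrac{1}{4}(a_1+a_2+a_3-a_4)^2 \le a_1^2+a_2^2+a_3^2+a_4^2.
\]

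Integrating over $K$ and using $|K|=4|K_l|$ from \eqref{eq:triangle equalities}, I get
\[
\Bigl\|\tfrac{\partial (I_Hv)}{\partial x}\Bigr\|_{0,K}^{2}
=|K|\,\bigl(\partial_x(I_Hv)|_K\bigr)^{2}
\le |K|\sum_{l=1}^{4}a_l^{2}
=4\sum_{l=1}^{4}|K_l|\,a_l^{2}
=4\sum_{l=1}^{4}\Bigl\|\tfrac{\partial v}{\partial x}\Bigr\|_{0,K_l}^{2}.
\]
Summing this local bound over all $K\in\Tri{H}$ (the fine triangles $K_l$ partition $\Omega$ as $K$ ranges over $\Tri{H}$) gives the first inequality with constant $4$.

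For the $\partial_y$ estimate, the derivation of Proposition~\ref{proposition: relations} rests only on two ingredients: the representation \eqref{eq:derivative identity} of the derivative as a linear combination of vertex values with coefficients involving differences of $y$-coordinates, and the arithmetic relations \eqref{eq:triangle equalities} among $y$-coordinates of the midpoint-refined triangle. By the same computation, a linear $v$ on $K$ satisfies $\partial_y v|_K = -\sum_{E\in\partial K}(\delta_E^K x)\,v_E^K$ with $\delta_E^K x = (x_j-x_i)/(2|K|)$, and the $x$-coordinates of the refined triangle in Figure~\ref{fig:refinement} satisfy the same midpoint identities as in \eqref{eq:triangle equalities}. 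Consequently the analogue $\partial_y(I_Hv)|_K=\tfrac12\bigl(\sum_{l=1}^3\partial_y v|_{K_l}-\partial_y v|_{K_4}\bigr)$ holds, and the argument above transfers verbatim to prove the second inequality.

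I do not expect a real obstacle here: once Proposition~\ref{proposition: relations} (and its $\partial_y$ counterpart) is in hand, the estimate is a two-line application of Cauchy--Schwarz combined with $|K|=4|K_l|$. The only point worth being careful about is explicitly noting the $x/y$ symmetry that supplies the $\partial_y$ version of \eqref{eq:identity}, since the paper states Proposition~\ref{proposition: relations} only for the $x$-derivative.
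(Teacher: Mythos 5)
Your proof is correct and follows essentially the same route as the paper: the identity of Proposition~\ref{proposition: relations}, the discrete Cauchy--Schwarz bound $\tfrac14(a_1+a_2+a_3-a_4)^2\le\sum_l a_l^2$, the relation $|K|=4|K_l|$, and summation over $K\in\Tri{H}$. Your extra remark spelling out the $\partial_y$ analogue of \eqref{eq:identity} via the $x$-coordinate version of \eqref{eq:derivative identity} is exactly what the paper leaves implicit with ``in a similar fashion.''
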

 \begin{proof}
We only need to prove this estimate locally for any $K\in \Tri{H}$. So we fix $K\in \Tri{H}$ and we would like to show that
 $\|\frac{\partial (I_{H}v)}{\partial x}\|_{0,K}^{2}\leq 4 \|\frac{\partial v}{\partial
 x}\|_{0,K}^{2}$.

 From \eqref{eq:identity}, for the $L_2$ norm $ \|\frac{\partial
(I_{H}v)}{\partial x}\|_{0,K}^{2} $ we have

\begin{eqnarray*}
\left\|\frac{\partial (I_{H}v)}{\partial x}\right\|_{0,K}^{2}
 &=&\frac{|K|}{4}\left(\sum\limits_{l=1}^{3}\frac{\partial v}{\partial x}\bigg|_{K_{l}}-\frac{\partial v}{\partial x}\bigg|_{K_{4}}\right)^{2}\\
 &\leq &|K|\sum\limits_{l=1}^{4}\left(\frac{\partial v}{\partial x}\bigg|_{K_{l}}\right)^{2}
=4\sum\limits_{l=1}^{4}|K_{l}|\left(\frac{\partial v}{\partial x}\bigg|_{K_{l}}\right)^{2}\\
&=&4\sum\limits_{l=1}^{4}\left\|\frac{\partial v}{\partial x}\right\|_{0,K_{l}}^{2}=4\left\|\frac{\partial v}{\partial x}\right\|_{0,K}^{2}.
 \end{eqnarray*}

Summing over all the elements then gives
 $\|\frac{\partial (I_{H}v)}{\partial x}\|_{0}^{2}\leq 4\|\frac{\partial v}{\partial
 x}\|_{0}^{2}.$
In a similar fashion we can prove that $\|\frac{\partial (I_{H}v)}{\partial y}\|_{0}^{2}\leq 4\|\frac{\partial v}{\partial
 y}\|_{0}^{2},$
and the proof of the lemma is complete.
\end{proof}

As a consequence, we have the following approximation result for the coarse grid interpolant.

\begin{lemma} \label{lemma: two level convergence}
 For any $v\in V_{h}$,
 we have
 $$
 \left\|\frac{\partial (v-I_{H}v)}{\partial x}\right\|_{0}^{2}\lesssim  \left\|\frac{\partial v}{\partial
 x}\right\|_{0}^{2},
 \quad
 \left\|\frac{\partial (v-I_{H}v)}{\partial y}\right\|_{0}^{2}\lesssim  \left\|\frac{\partial v}{\partial
 y}\right\|_{0}^{2},
 $$
 and
 $$
 \|v-I_{H}v\|_{0}^{2} \lesssim h^{2} |v|_{1}^{2}.
 $$
\end{lemma}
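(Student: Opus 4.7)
The plan is to get the two $H^1$-seminorm bounds for free from the triangle inequality combined with Lemma~\ref{lemma: two-level}, and then attack the $L^2$ estimate by a local Poincar\'e/scaling argument on each coarse triangle, exploiting the key fact that $v-I_Hv$ vanishes at all coarse vertices.

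For the first two inequalities, I would simply write
\[
\left\|\frac{\partial (v-I_Hv)}{\partial x}\right\|_0 \le \left\|\frac{\partial v}{\partial x}\right\|_0 + \left\|\frac{\partial (I_Hv)}{\partial x}\right\|_0,
\]
and apply Lemma~\ref{lemma: two-level} to the second term, squaring at the end. The same reasoning handles the $y$-derivative. This gives constants like $9$, but the assertion is only up to a generic constant, so that is enough.

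The real work is the $L^2$ bound $\|v-I_Hv\|_0^2 \lesssim h^2|v|_1^2$. I would argue element-by-element on the coarse mesh. Fix $K\in\Tri{H}$ and set $w = v - I_Hv$. Since $I_H$ is a nodal projection, $w$ vanishes at the three vertices of $K$, while $w|_K$ lies in the finite-dimensional space $W_K$ of continuous functions on $K$ that are affine on each of the four subtriangles $K_1,\ldots,K_4$ and vanish at the vertices of $K$. This space is three-dimensional (parametrized by the values at the three edge midpoints of $K$), and on it $|\cdot|_{1,K}$ is a genuine norm: $|w|_{1,K}=0$ would force $w$ to be constant, which together with the vanishing at a vertex gives $w\equiv 0$. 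By equivalence of norms on the fixed reference coarse triangle, one gets $\|\hat w\|_{0,\hat K}^2 \lesssim |\hat w|_{1,\hat K}^2$ on $W_{\hat K}$, and a standard affine scaling then yields
\[
\|w\|_{0,K}^2 \;\lesssim\; H^2\, |w|_{1,K}^2 \;=\; H^2\left(\left\|\tfrac{\partial w}{\partial x}\right\|_{0,K}^2 + \left\|\tfrac{\partial w}{\partial y}\right\|_{0,K}^2\right).
\]
Summing over $K\in\Tri{H}$ and invoking the two seminorm bounds already established (with $H\simeq h$) delivers $\|v-I_Hv\|_0^2\lesssim h^2|v|_1^2$.

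The main obstacle is the local Poincar\'e-type inequality on $W_K$. One should be careful that although $w=v-I_Hv$ vanishes at the coarse vertices of $K$, it can be nonzero at the edge midpoints and in the interior of $K$, so one cannot directly invoke a one-dimensional Poincar\'e argument along a single edge. The clean route is the abstract one sketched above: observe that $W_{\hat K}$ is finite-dimensional and that $|\cdot|_{1,\hat K}$ separates points on $W_{\hat K}$, so equivalence of norms gives the constant, and affine equivalence of all $K\in\Tri{H}$ to $\hat K$ transfers the estimate with the correct $H^2$ factor. Once that is in place, the summation and the final appeal to the seminorm estimates is routine.
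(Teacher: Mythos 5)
Your proposal is correct, and for the two directional seminorm estimates it coincides with the paper's argument (triangle inequality plus Lemma~\ref{lemma: two-level}). Where you diverge is the $L^2$ bound $\|v-I_Hv\|_0^2\lesssim h^2|v|_1^2$: the paper does not prove it at all but simply cites \cite[Lemma 4.4]{Bramble-Xu1991}, whereas you give a self-contained local argument — on each $K\in\Tri{H}$ the function $w=v-I_Hv$ lies in the three-dimensional space $W_K$ of continuous functions, affine on the four children $K_1,\dots,K_4$ and vanishing at the coarse vertices, on which $|\cdot|_{1,K}$ is a norm; equivalence of norms on the reference element plus affine scaling gives $\|w\|_{0,K}^2\lesssim H^2|w|_{1,K}^2$, and summation together with the already-proved seminorm stability and $H\simeq h$ finishes the job. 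This is sound: since the fine mesh is the regular (midpoint) refinement of $\Tri{H}$, $W_K$ is the affine image of $W_{\hat K}$, so the reference-element constant transfers, and the scaling constant is uniform because the coarse elements are shape regular (in the paper's setting they are all congruent). Your route buys a proof that does not lean on external literature and makes transparent exactly which property of $I_H$ is used (vanishing of $v-I_Hv$ at coarse vertices); the paper's citation is shorter and appeals to a standard, slightly more general interpolation estimate. Either way the final combination $\|v-I_Hv\|_{0}^2\lesssim H^2\bigl(|v|_1^2+|I_Hv|_1^2\bigr)\lesssim h^2|v|_1^2$ is exactly what the lemma asserts.
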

\begin{proof}
The first two estimates follow from the inequalities given in Lemma \ref{lemma: two-level}. The third estimate can be found in \cite[Lemma
4.4]{Bramble-Xu1991}.
\end{proof}
\begin{remark} In fact, in the proof of Lemma \ref{lemma: two-level},
 there is no any requirement for the partition. The result is
true for partition $\Tri{h}$ obtaining from regular refinement of any given partition $\Tri{H}$. So is the Lemma \ref{lemma: two level
convergence}.
\end{remark}

\section{Stability estimates on the fine grid\label{section:StabilityF}}

In this section we give estimates on the stability of the partition of unity introduced in Section ~\ref{section:Preliminaries},
equation~\eqref{eq:PU}. In what follows, to avoid proliferation of indicies, we will omit the subscript $i$ and we will write $\theta$ instead
of $\theta_i$.

For any given $K\in\Tri{H}$ ($K=\bigcup\limits_{l=1}^4K_l$ with $K_l\in\Tri{h}$) shown in Figure~\ref{fig:refinement}, we label with 1, 2, and 3
the vertices of $K$, and 4, 5, and 6 the midpoints of $K$ (these are also vertices of $K_4$). The corresponding coordinates are denoted by
$(x_{j},y_{j}), 1\leq j\leq 6$. Further, for a continuous function $v$, when there is no confusion, we write $v_{j}:=v(x_{j},y_{j})$.

In what follows we also denote
 \begin{equation}\label{eq: minmum of edge}
 E_{min}=arg\min\limits_{E\in\partial
 K_{4}}\{|\delta_{E}^{K_{4}}y|\}, \quad
\mbox{where $\delta_{E}^{K_4}y$ is defined in~\eqref{eq:deltaE}}.
 \end{equation}
 Then $E_{min}$ is related to the anisotropic direction. In fact, to
 indicate the dependence on the particular element, one may write
 $E_{min}^{K_4}$ instead of $E_{min}$, but for simplicity we have
 chosen to omit the superscript $K_4$. Furthermore, we may denote
 $E_{min}'=arg\min\limits_{E'\in\partial
   K'_{4}}\{|\delta_{E'}^{K'_{4}}y|\}$.

Let us now consider a function $w \in V_{h}$ vanishing at the coarse grid vertices, that is, $w$ satisfies $I_H w=0$. We have the following 4
cases on a fixed $K\in\Tri{H}$:
\renewcommand{\labelenumi}{Case \arabic{enumi}.}
\begin{enumerate}
\addtocounter{enumi}{-1}
\item $\theta$ is zero in $K_4$;

\item $\theta$ is nonzero in $K_4$ and convex in $K_4$
  (i.e. $\theta\neq 0$ a.e. in $K_4$);

\item $\theta$ is nonzero at only one of the vertices of $K_4$ and concave
  in $K_4$;

\item $\theta$ is nonzero at exactly two of the vertices of $K_4$ and concave
  in $K_4$.
\end{enumerate}

The rest of this section contains technical results and their proofs, which can be classified according to the cases above. To prove the
stability estimates on the fine grid, we need to bound $\|\frac{\partial (I_{h}(\theta w))}{\partial x}\|$.

\begin{itemize}
\item For the Case 0, there is nothing to prove, since in this case $I_{h}(\theta w)=0$.

\item For the Case 1 the corresponding estimate is given in Lemma
\ref{lemma: 3 point stability}. In Case 1 we also need to assume quasi-uniformity of the mesh.  We also note that Proposition~\ref{proposition:
3 point property} (for Case 1) contains an estimate which is later used in Case 2 and Case 3.

\item The stability estimates in Case 2 and Case 3 are given in Lemmas
\ref{lemma: 1 point stability} and \ref{lemma: 2 point
  stability}, respectively, under the assumption that $\Tri{h}$ is a
uniform partition.
\end{itemize}
\begin{remark} In summary, for uniform mesh we have proved the stability
estimate in all cases. In addition, we have proved some of the results in more general case of unstructured, but quasi-uniform mesh (Case 1).
\end{remark}

 \begin{lemma}\label{lemma: 3 point stability}
 Assume that $\theta\neq 0$ in $K_4$ and convex in $K_4$ (Case 1). Then
  $$\left\|\frac{\partial (I_{h}(\theta w))}{\partial x}\right\|_{0,K}^{2}\lesssim \left\|\frac{\partial (\theta w)}{\partial
 x}\right\|_{0,K}^{2}.$$
 \end{lemma}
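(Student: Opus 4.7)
The plan is to establish the inequality subtriangle by subtriangle on $K_1, K_2, K_3, K_4$ and then sum over $l$. The fundamental observation is that $\theta$ depends only on $y$, so on each $K_l$ one has $\partial_x(\theta w) = \theta\,\partial_x w$ with $\partial_x w|_{K_l}$ a constant, while $\partial_x I_h(\theta w)|_{K_l}$ is also a constant to be computed from \eqref{eq:derivative identity} applied to the nodal values $(I_h(\theta w))(V_j) = \theta_j w_j$. The standing hypothesis $I_H w = 0$ gives $w_1 = w_2 = w_3 = 0$, so the nodal expansion reduces to
\[
I_h(\theta w) = \sum_{j=4,5,6} \theta_j w_j\,\phi_j^h,
\]
and only the three midpoint degrees of freedom are active on $K$.

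First I would work out the central subtriangle $K_4$. Using \eqref{eq:derivative identity} together with the midpoint identities \eqref{eq:triangle equalities} to express the edge $y$-differences of $K_4$ as $y$-differences between coarse vertices of $K$, one writes $\partial_x I_h(\theta w)|_{K_4} = \sum_{j=4,5,6} \alpha_j \theta_j w_j$ with geometric coefficients $\alpha_j$ that also produce $\partial_x w|_{K_4} = \sum_{j=4,5,6} \alpha_j w_j$. A direct reference-triangle calculation shows that when $\theta$ is linear on $K_4$ one has $|\partial_x I_h(\theta w)|_{K_4}| \lesssim \max_{K_4}|\theta|\cdot|\partial_x w|_{K_4}|$ with a universal constant, and the piecewise linear case is handled by splitting $K_4$ at the (single) breakpoint of $\theta$. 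Multiplying by $|K_4|$ and combining with the lower bound $\int_{K_4}\theta^2 \gtrsim |K_4|\,\max_{K_4}\theta^2$ — which holds because in Case 1 the nonnegative, (piecewise) linear function $\theta$ does not vanish on any positive-measure subset of $K_4$ — yields the desired subtriangle estimate on $K_4$.

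For the outer triangles $K_1, K_2, K_3$, each has exactly one coarse vertex (where $w=0$) and two midpoints shared with $K_4$. Thus the nodal formula for $\partial_x I_h(\theta w)|_{K_l}$ reduces to two terms involving only the midpoint values $\theta_j w_j$ already controlled on $K_4$, and an analogous reference-triangle computation gives $|\partial_x I_h(\theta w)|_{K_l}| \lesssim \max_{K_l}|\theta|\cdot|\partial_x w|_{K_l}|$. If $\theta$ has a breakpoint inside $K_l$ causing $\theta$ to vanish on a subregion, one splits $K_l$ at the breakpoint; on the vanishing piece both sides are zero, and on the other piece the same linear argument applies. Summing the four subtriangle estimates produces $\|\partial_x I_h(\theta w)\|_{0,K}^2 \lesssim \|\partial_x(\theta w)\|_{0,K}^2$.

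The main obstacle is the passage from the discrete combinatorial expression $\sum_j \alpha_j \theta_j w_j$ for $\partial_x I_h(\theta w)|_{K_l}$ to the continuous integral $(\partial_x w|_{K_l})^2\int_{K_l}\theta^2$ that bounds $\|\partial_x(\theta w)\|_{0,K_l}^2$ from below. This requires showing that the vertex values $\theta_4,\theta_5,\theta_6$ are comparable, up to a universal constant, to $\bigl(\int_{K_4}\theta^2/|K_4|\bigr)^{1/2}$, which is precisely what the Case 1 hypothesis supplies via the geometry of the hat function $\theta_i(y)$ relative to $K_4$. Once this comparability is in hand on $K_4$, the quasi-uniformity of $\mathcal{T}_h$ together with the $y$-strip structure of the support of $\theta$ transfers the bound to $K_1, K_2, K_3$ and closes the argument.
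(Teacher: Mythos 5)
Your reduction to a per-subtriangle estimate does not work: the key claimed inequality $\bigl|\partial_x I_h(\theta w)|_{K_l}\bigr| \lesssim \max_{K_l}|\theta|\cdot\bigl|\partial_x w|_{K_l}\bigr|$ is false, because nodal interpolation does not commute with multiplication by $\theta(y)$ on a single element. Concretely, on $K_4$ take midpoint values $w_j=y_j$ (allowed, since $I_Hw=0$ only constrains the coarse vertices): then $\partial_x w|_{K_4}=0$, but with $\theta$ linear in $y$ on $K_4$ one gets $\partial_x I_h(\theta w)|_{K_4}=\sum_j(\delta^{K_4}_{E_j}y)\,\theta_j w_j = s\sum_j(\delta^{K_4}_{E_j}y)\,y_jw_j\neq 0$ in general ($s=\pm h^{-1}$ the slope of $\theta$), so the right-hand side of your elementwise bound vanishes while the left does not; splitting $K_l$ at a breakpoint of $\theta$ cannot repair this, since $I_h(\theta w)|_{K_l}$ is determined by the three vertex values regardless. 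The same two-term cancellation problem occurs on $K_1,K_2,K_3$. The quantity $\partial_x I_h(\theta w)|_{K_l}$ on one element simply cannot be controlled by $\theta$ and $\partial_x w$ on that same element; the estimate is intrinsically coupled across the four children of $K$. This is exactly what the paper exploits: using $I_H w=0$ and $I_H(\theta w)=0$ together with Proposition~\ref{proposition: relations} it drops the $K_4$ contributions, writes $\|\partial_x I_h(\theta w)\|^2_{0,K}\approx |K|^{-1}\|M\Theta \bm z\|^2_{\ell_2}$ and $\|\theta\,\partial_x w\|^2_{0,K}\gtrsim |K|^{-1}\|M\bm z\|^2_{\ell_2}$ with $\bm z=(w_4,w_5,w_6)^t$, and then bounds the entries of $M^{-1}\Theta M$; the decisive cancellation is $|\theta_i-\theta_j|\lesssim h^{-1}|y_i-y_j|$ paired against ratios like $(y_6-y_5)/(y_4-y_6)$, which can be arbitrarily large individually. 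Your proposal contains no mechanism playing the role of this commutator-type cancellation, and without it the bound fails precisely in the near-degenerate (non-aligned, small $y$-difference) configurations the lemma is designed for.

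Two secondary points. First, your claim that in Case 1 the three vertex values $\theta_4,\theta_5,\theta_6$ are all comparable to $\bigl(\int_{K_4}\theta^2/|K_4|\bigr)^{1/2}$ is false: one vertex can lie on the boundary of the support of $\theta$, where $\theta=0$, while the average is positive. Second, the lower bound that is actually needed on the right-hand side, namely $\|\theta\|^2_{0,K_l}\gtrsim|K_l|$ on (at least three of) the sub-elements, does not follow from Case 1 alone but uses quasi-uniformity of the mesh, which guarantees a vertex with $\theta\gtrsim 1$; this is where the quasi-uniformity hypothesis of the paper enters, and your argument would need it in the same place.
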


 \begin{proof}
   Since $I_Hw=0$ we obviously have that $I_H(\theta w)=0$ as
   well. From \eqref{eq:identity} in Proposition \ref{proposition: relations}
with $v=I_h(\theta w)$ we obtain that
\begin{equation}\label{eq:51}
 \sum_{l=1}^{3}\frac{\partial (I_h(\theta w))}{\partial x}\bigg|_{K_{l}}-\frac{\partial (I_h(\theta w))}{\partial x}\bigg|_{K_{4}}
 =2\frac{\partial(I_H (I_h(\theta w)))}{\partial x}\bigg|_{K}=2\frac{\partial(I_H(\theta w))}{\partial x}\bigg|_{K}=0.
 \end{equation}
In addition, from \eqref{eq:identity}, with $v=w$ we have
 \begin{equation}\label{eq:52}
 \sum_{l=1}^{3}\frac{\partial w}{\partial x}\bigg|_{K_{l}}-\frac{\partial w}{\partial x}\bigg|_{K_{4}}
 =2\frac{\partial(I_H w)}{\partial x}\bigg|_{K}=0.
 \end{equation}

Therefore, from~\eqref{eq:51} for the $L^{2}$ norm $\|\frac{\partial (I_{h}(\theta w))}{\partial x}\|_{0,K}$ we have:
 \begin{eqnarray}
 \left\|\frac{\partial (I_{h}(\theta w))}{\partial x}\right\|_{0,K}^{2}
&=&\sum\limits_{l=1}^{4}|K_{l}|\left(\frac{\partial(I_h (\theta w))}{\partial x}\bigg|_{K_l}\right)^{2}
 \thickapprox\sum\limits_{l=1}^{3}|K_{l}|\left(\frac{\partial(I_h (\theta w))}{\partial x}\bigg|_{K_l}\right)^{2}\nonumber\\
 &\thickapprox&\frac{1}{|K|}\{[(y_{4}-y_{6})\theta_{6}w_{6}+(y_{5}-y_{4})\theta_{5}w_{5}]^{2}\label{norm1}\\
                &&\,\,\quad+[(y_{5}-y_{4})\theta_{4}w_{4}+(y_{6}-y_{5})\theta_{6}w_{6}]^{2}\nonumber\\
                &&\,\,\quad+[(y_{6}-y_{5})\theta_{5}w_{5}+(y_{4}-y_{6})\theta_{4}w_{4}]^{2}\}.\nonumber
 \end{eqnarray}

 On the other hand, since $\theta$ is a convex function in $K_4$, and $\theta$ is
supported in a
   $2h$ width strip, $\theta$ should be convex in at least three of
   $K_l(l=1:4)$.

 For any $K_l$ in which $\theta$ is convex, we have
 $$
 \|\theta\|_{0,K_{l}}^{2}\gtrsim
 |K_l|\sum_{j=1}^{3}\theta^{2}(x_{j}^{K_{l}},y_{j}^{K_{l}}),
 $$
 where $(x_{j}^{K_{l}},y_{j}^{K_{l}})$ denote the coordinates of the
 $j$-th vertex of element $K_{l}$ for $j=1:3$.  Since the mesh is
 quasi-uniform, we have that
 $\max\limits_{j=1:3}\{y_j^{K_{l}}\}-\min\limits_{j=1:3}\{y_j^{K_{l}}\}\gtrsim
 h$, and there exists at least one vertex $j_0$ such that
 $\theta(x_{j_{0}},y_{j_0})=\theta(y_{j_0})\gtrsim 1$. Hence, if
 $\theta$ is a convex function in $K_{l}$, then
 $\|\theta\|_{0,K_{l}}^{2}\gtrsim|K_l|$, and we conclude that there
 are at least three elements $K_{l}$, where
 $\|\theta\|_{0,K_{l}}^{2}\gtrsim|K_l|$ holds.

 From this argument and ~\eqref{eq:52} we get
 \begin{eqnarray}
 \left\|\frac{\partial (\theta w)}{\partial x}\right\|_{0,K}^{2}
&=&\left\|\theta\frac{\partial w}{\partial x}\right\|_{0,K}^{2}
  = \sum\limits_{l=1}^{4}\|\theta\|_{0,K_{l}}^{2} \left(\frac{\partial w}{\partial x}\bigg|_{K_l}\right)^{2}
     \thickapprox\sum\limits_{l=1}^{3}|K_l|\left(\frac{\partial w}{\partial
     x}\bigg|_{K_l}\right)^{2}\nonumber\\
 &\thickapprox&\frac{1}{|K|}\{[(y_{4}-y_{6})w_{6}+(y_{5}-y_{4})w_{5}]^{2}\label{norm2}\\
                    &&\,\,\quad
                    +[(y_{5}-y_{4})w_{4}+(y_{6}-y_{5})w_{6}]^{2}\nonumber\\
                    &&\,\,\quad
                    +[(y_{6}-y_{5})w_{5}+(y_{4}-y_{6})w_{4}]^{2}\}.\nonumber
 \end{eqnarray}

 Introducing now
 \begin{equation*}
 M=\left(
 \begin{matrix}
 0 &y_{5}-y_{4} &y_{4}-y_{6}\\
 y_{5}-y_{4} &0 &y_{6}-y_{5}\\
 y_{4}-y_{6} &y_{6}-y_{5} &0\\
 \end{matrix}
 \right), \quad
\Theta=\left(
 \begin{matrix}
 \theta_{4} &&\\
 &\theta_{5} &\\
 &&\theta_{6}\\
 \end{matrix}
 \right),
 \end{equation*}
we rewrite~\eqref{norm1} as
 \begin{equation*}
 \left\|\frac{\partial (I_{h}(\theta w))}{\partial x}\right\|_{0,K}^{2}\approx
 \frac{1}{|K|}\|M\Theta\bm{z}\|^2_{\ell_2}, \quad
\bm{z}= (w_{4},  w_{5}, w_{6})^t,
\end{equation*}
while \eqref{norm2} can be rewritten as
 \begin{equation*}
 \left\|\frac{\partial (\theta w)}{\partial x}\right\|_{0,K}^{2}\approx
 \frac{1}{|K|}\|M\bm{z}\|^2_{\ell_2}, \quad
\bm{z}= (w_{4},  w_{5}, w_{6})^t.
\end{equation*}
Here, $\|\cdot\|_{\ell_2}$ is the usual Euclidean norm on $\mathbb{R}^3$.

 To prove the estimate $\|\frac{\partial (I_{h}(\theta w))}{\partial x}\|_{0,K}^{2}
 \lesssim\|\frac{\partial (\theta w)}{\partial x}\|_{0,K}^{2}$, we
 only need to show that
\begin{equation}\label{eq:haha}
\frac{1}{|K|}\|M\Theta\bm{z}\|^2_{\ell_2}\lesssim
 \frac{1}{|K|}\|M\bm{z}\|^2_{\ell_2}\quad\mbox{for all}\quad \bm{z}\in
 \mathbb{R}^3.
\end{equation}

Such an inequality is easy to get in the case of $\det(M)=0$, so we may assume $M$ is invertible (i.e.
$\det(M)=2(y_{6}-y_{5})(y_{4}-y_{6})(y_{5}-y_{4})\neq 0$). We then need a bound on the eigenvalues of $M^{-1}\Theta M^{2}\Theta
M^{-1}=(M^{-1}\Theta M) (M^{-1}\Theta M)^T$.  In fact, we only need to bound the entries of $M^{-1}\Theta M$ because all the norms of this
$3\times 3$ matrix are equivalent. Thus, if the entries of $M^{-1}\Theta M$ are bounded in absolute value, then the eigenvalues of
$(M^{-1}\Theta M) (M^{-1}\Theta M)^T$ are bounded and consequently \eqref{eq:haha} holds.

Directly computing the inverse of $M$ gives
 \begin{equation*}
 M^{-1}=
 \frac{1}{\det(M)}
 \left(
 \begin{array}{ccc}
 -(y_{6}-y_{5})^{2} &(y_{6}-y_{5})(y_{4}-y_{6}) &(y_{6}-y_{5})(y_{5}-y_{4})\\
  (y_{4}-y_{6}) (y_{6}-y_{5}) &-(y_{4}-y_{6})^{2} &(y_{4}-y_{6})(y_{5}-y_{4})\\
 (y_{5}-y_{4})(y_{6}-y_{5}) &(y_{5}-y_{4})(y_{4}-y_{6}) &-(y_{5}-y_{4})^{2}\\
 \end{array}
 \right).
 \end{equation*}
We then calculate   $M^{-1}\Theta M$ to obtain that
 \begin{equation*}
  M^{-1}\Theta M=\frac{1}{2}
 \left(
 \begin{array}{ccc}
 \theta_{5}+\theta_{6}
 &\frac{y_{6}-y_{5}}{y_{4}-y_{6}}(-\theta_{4}+\theta_{6})
 &\frac{y_{6}-y_{5}}{y_{5}-y_{4}}(-\theta_{4}+\theta_{5})\\
 \frac{y_{4}-y_{6}}{y_{6}-y_{5}}(-\theta_{5}+\theta_{6})
 &\theta_{4}+\theta_{6}
 &\frac{y_{4}-y_{6}}{y_{5}-y_{4}}(\theta_{4}-\theta_{5})\\
 \frac{y_{5}-y_{4}}{y_{6}-y_{5}}(\theta_{5}-\theta_{6})
 &\frac{y_{5}-y_{4}}{y_{4}-y_{6}}(\theta_{4}-\theta_{6})
 &\theta_{4}+\theta_{5}\\
 \end{array}
 \right).
 \end{equation*}
 Since $\theta$ is convex in $K_4$, by the definition of $\theta$, it
 is easy to see that
 $$
 |\theta_{6}-\theta_{5}|\lesssim h^{-1} |y_{6}-y_{5}|,
 $$
 $$
 |\theta_{5}-\theta_{4}|\lesssim h^{-1} |y_{5}-y_{4}|,
 $$
 $$
 |\theta_{4}-\theta_{6}|\lesssim h^{-1} |y_{4}-y_{6}|.
 $$
 Since $|y_{i}-y_{j}|\lesssim h$, we have $|(M^{-1}\Theta
 M)_{ij}|\lesssim 1$ and the proof of the Lemma is complete.
\end{proof}

 Next result is an auxiliary estimate used later in the proof of
 Lemma \ref{lemma: 1 point stability} and \ref{lemma: 2 point
 stability}. In the statement of the lemma we used the notation given
at the end of Section~\ref{section:Preliminaries}.

 \begin{proposition}\label{proposition: 3 point property}
  Assume that $\theta\neq 0$ and convex in $K_4$. Then the following
  inequality holds
 \begin{equation*}
\left\|\frac{\partial (\theta w)}{\partial x}\right\|_{0,K}^{2}\gtrsim |K| (\max\limits_{E\in\partial
  K_{4}}\{|\delta_{E}^{K_{4}}y|\}^{2}\cdot(w_{E_{min}}^{K_{4}})^{2}
  +(\delta_{E_{min}}^{K_{4}}y)^{2}\cdot
  \max\limits_{E\in\partial K_{4}}\{w_{E}^{K_{4}}\}^{2}).
 \end{equation*}
 \end{proposition}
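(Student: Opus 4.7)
The plan is to reduce the statement to a pure $3\times 3$ linear-algebra bound on the matrix $M$ introduced in the proof of Lemma~\ref{lemma: 3 point stability}, and then exploit the affine constraint $a+b+c=0$ (with $a=y_5-y_4$, $b=y_6-y_5$, $c=y_4-y_6$) coming from the fact that the midpoint vertices $4,5,6$ form the triangle $K_4$.

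First I would reuse the computation from Lemma~\ref{lemma: 3 point stability}: since $\theta$ is convex and nonzero on $K_4$, at least three of the subtriangles satisfy $\|\theta\|_{0,K_l}^2\gtrsim|K_l|$, and the identity $\partial_x w|_{K_4}=\sum_{l=1}^3\partial_x w|_{K_l}$ (a consequence of $I_Hw=0$ and Proposition~\ref{proposition: relations}) lets me discard the $K_4$ contribution up to a multiplicative constant. This yields
\begin{equation*}
\|\partial_x(\theta w)\|_{0,K}^2 \gtrsim \frac{1}{|K|}\bigl(T_1^2+T_2^2+T_3^2\bigr),\qquad T_1=aw_5+cw_6,\ T_2=aw_4+bw_6,\ T_3=bw_5+cw_4.
\end{equation*}
Without loss of generality assume $|a|=\min(|a|,|b|,|c|)$, so that $E_{min}$ is the edge of $K_4$ between vertices $4$ and $5$ and $w_{E_{min}}^{K_4}=w_6$. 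Because $|\delta_E^{K_4}y|\approx|y_j-y_i|/|K|$ and $|K_l|\approx|K|$, the desired inequality reduces to proving
\begin{equation*}
T_1^2+T_2^2+T_3^2 \;\gtrsim\; \max(b^2,c^2)\,w_6^2 \;+\; a^2\,\max(w_4^2,w_5^2,w_6^2).
\end{equation*}

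The key algebraic identity is
\begin{equation*}
bT_1+cT_2-aT_3 = 2bc\,w_6,
\end{equation*}
obtained by direct expansion using $T_3=bw_5+cw_4$. Cauchy--Schwarz combined with $|a|\le|b|\le|c|$ yields $4b^2c^2w_6^2\le 3c^2(T_1^2+T_2^2+T_3^2)$, hence $b^2w_6^2\lesssim T_1^2+T_2^2+T_3^2$. The triangle inequality $|c|=|a+b|\le 2|b|$ then upgrades this to $c^2w_6^2\lesssim T_1^2+T_2^2+T_3^2$, which is the first term on the right-hand side. For the second term, the trivial rearrangements $aw_4=T_2-bw_6$ and $aw_5=T_1-cw_6$, combined with the bound just proved, give $a^2w_4^2,\,a^2w_5^2\lesssim T_1^2+T_2^2+T_3^2$, while $a^2w_6^2\le c^2w_6^2$ is already covered.

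The main obstacle is spotting the correct linear combination $bT_1+cT_2-aT_3=2bc\,w_6$; the choice of $|a|$ as the smallest coefficient is essential, because it lets Cauchy--Schwarz produce a bound involving $c^2$ alone, with no deterioration in the ratios of edge-length differences. Once this identity is in hand the remainder is routine manipulation, and the bound passes directly into the statement of the Proposition after restoring the factor $1/|K|$ and rewriting the $y$-differences as $\delta_E^{K_4}y$.
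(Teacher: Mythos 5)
Your proposal is correct and follows essentially the same route as the paper: it reuses the expansion (\ref{norm2}), fixes $E_{min}$ as the edge joining vertices $4$ and $5$ so that $w_{E_{min}}^{K_4}=w_6$, and the key combination $bT_1+cT_2-aT_3=2bc\,w_6$ is exactly the paper's linear combination (the paper writes it divided by $b$, bounding the ratios $|c/b|\le 2$, $|a/b|\le 1$ instead of invoking Cauchy--Schwarz), after which the terms $aw_4=T_2-bw_6$ and $aw_5=T_1-cw_6$ are recovered just as in the paper. The only cosmetic gap is that you state only $|a|=\min$ but later write $|a|\le|b|\le|c|$; this is harmless since swapping vertices $4$ and $5$ (or using $|b|\simeq|c|$, which follows from $a+b+c=0$ and $|a|$ minimal) covers the other ordering.
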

 \begin{proof}
 Let $E_{min}$ be defined as \eqref{eq: minmum of edge}.
 Without loss of generality, assume
 $E_{min}=\{(x_4,y_4),(x_5,y_5)\}$, and then $w_6=w_{E_{min}}^{K_{4}}$.  This means
 $$|y_{5}-y_{4}|=\min\{|y_{5}-y_{4}|,|y_{4}-y_{6}|,|y_{6}-y_{5}|\}.$$
 Hence
\begin{equation*}
   \left|\frac{y_{5}-y_{4}}{y_{6}-y_{5}}\right|\leq 1,\quad
   \left|\frac{y_{5}-y_{4}}{y_{4}-y_{6}}\right|\leq 1,
\end{equation*}
and by triangle inequalities we have
\begin{equation*}
 \left|\frac{y_{4}-y_{6}}{y_{6}-y_{5}}\right|\leq 2,\quad
 \left|\frac{y_{6}-y_{5}}{y_{4}-y_{6}}\right|\leq 2.
\end{equation*}

 According to the expression (\ref{norm2}) and the above inequalities, we have
 \begin{eqnarray*}
 \left\|\frac{\partial (\theta w)}{\partial x}\right\|_{0,K}^{2}
 &\thickapprox&\frac{1}{|K|}\{[(y_{4}-y_{6})w_{6}+(y_{5}-y_{4})w_{5}]^{2}
                    +[(y_{5}-y_{4})w_{4}+(y_{6}-y_{5})w_{6}]^{2}\\
                    &&\,\,\quad
                    +[(y_{6}-y_{5})w_{5}+(y_{4}-y_{6})w_{4}]^{2}\}\\
 &\gtrsim
 &\frac{1}{|K|}
  \{[(y_{4}-y_{6})w_{6}+(y_{5}-y_{4})w_{5}]
     +[(y_{5}-y_{4})w_{4}+(y_{6}-y_{5})w_{6}]\frac{y_{4}-y_{6}}{y_{6}-y_{5}}\\
&&\,\,\quad-[(y_{6}-y_{5})w_{5}+(y_{4}-y_{6})w_{4}]\frac{y_{5}-y_{4}}{y_{6}-y_{5}}\}^{2}\\
 &=
 &\frac{2}{|K|}
  [(y_{4}-y_{6})w_{6}]^{2}\\
 &\gtrsim
 &\frac{1}{|K|}
  \max\{|y_{5}-y_{4}|,|y_{4}-y_{6}|,|y_{6}-y_{5}|\}^{2}w_{6}^{2}.\\
 \end{eqnarray*}

 Combining with (\ref{norm2}), we have

 $$\left\|\frac{\partial (\theta w)}{\partial x}\right\|_{0,K}^{2}\gtrsim \frac{1}{|K|}
  \{[(y_{5}-y_{4})w_{4}]^{2}+[(y_{5}-y_{4})w_{5}]^{2}\}.$$

 So
 $$\left\|\frac{\partial (\theta w)}{\partial x}\right\|_{0,K}^{2}\gtrsim \frac{1}{|K|}
  \{\max\{|y_{5}-y_{4}|,|y_{4}-y_{6}|,|y_{6}-y_{5}|\}^{2}w_{6}^{2}+(y_{5}-y_{4})^{2}w_{4}^{2}+(y_{5}-y_{4})^{2}w_{5}^{2}\}.$$
 Notice again, here $E_{min}=\{(x_4,y_4),(x_5,y_5)\}$ and $w_6=w_{E_{min}}^{K_{4}}$, then we get the result.
\end{proof}

 We need to notice that till now we only require the mesh is
 quasi-uniform, since when $\theta$ is convex in element $K_4$,
 the semi-norm of interpolation function $\|\frac{\partial(I_h(\theta w))}{\partial x}\|_{0,K}$
 can be bounded by $\|\frac{\partial (\theta w)}{\partial x}\|_{0,K}$.
 However, this is not true when $\theta(y)$ is concave. In this case,
 $\|\frac{\partial(I_h(\theta w))}{\partial x}\|_{0,K}$ may also depend on some neighboring
 element. To get the information of the neighboring
 element, we assume the partition $\Tri{h}$ is uniform in the following.

 \begin{lemma}\label{lemma: 1 point stability}
 Assume that $\theta$ is nonzero at only one vertex of $K_4$
 and that $K^\prime$ is the unique element from $\Tri{H}$ which has this vertex
on one of its edges. Assume also that $\theta$ is concave in
 $K_4$ (Case 2). Then the following inequality holds
  $$\left\|\frac{\partial (I_{h}(\theta w))}{\partial
 x}\right\|_{0,K}^{2}\lesssim
 \left\|\frac{\partial (\theta w)}{\partial x}\right\|_{0,K'}^{2}.
$$
\end{lemma}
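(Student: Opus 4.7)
The plan is to reduce the bound to Proposition~\ref{proposition: 3 point property} applied to the neighboring element $K'$. Since $I_Hw=0$ gives $w_1=w_2=w_3=0$, and by hypothesis $\theta$ vanishes at two of the three vertices of $K_4$, the function $I_h(\theta w)$ can be nonzero at only one vertex of $K$; denote it (WLOG) by vertex $4$, where its value is $\theta_4 w_4$. Thus $I_h(\theta w)|_K$ is the nodal tent at vertex $4$, supported in those subtriangles of $K$ that meet vertex $4$, and similarly $I_h(\theta w)$ extends into the three subtriangles of $K'$ sharing vertex $4$.

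First I bound the left-hand side by $w_4^2$. On each subtriangle $K_l$ of $K$ touching vertex $4$, $I_h(\theta w)$ is linear with value $\theta_4 w_4$ at vertex $4$ and $0$ at the other two vertices, so \eqref{eq:derivative identity} gives
\[
\frac{\partial I_h(\theta w)}{\partial x}\bigg|_{K_l}=(\delta^{K_l}_{E_4}y)\,\theta_4 w_4,
\]
with $E_4\subset\partial K_l$ the edge opposite vertex $4$. Uniformity of $\Tri{h}$ gives $|\delta^{K_l}_{E_4}y|\lesssim h^{-1}$ and $|K_l|\sim h^2$, hence $\left\|\frac{\partial I_h(\theta w)}{\partial x}\right\|_{0,K_l}^2\lesssim(\theta_4 w_4)^2\leq w_4^2$; summing over the at most three contributing subtriangles gives $\left\|\frac{\partial I_h(\theta w)}{\partial x}\right\|_{0,K}^2\lesssim w_4^2$.

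Next I bound $w_4^2$ from below by $\|\partial_x(\theta w)\|_{0,K'}^2$. Since vertex $4$ is by assumption the midpoint of an edge of $K'$, it is one of the three vertices of the central subtriangle $K_4'$ of $K'$, and $\theta$ there equals $\theta_4>0$. Two geometric facts about $K_4'$ need to be verified: (i) $\theta$ is convex on $K_4'$, i.e., the peak line of $\theta$ does not cross $K_4'$; and (ii) vertex $4$ lies opposite the minimizing edge $E'_{\min}$ of $K_4'$, so that $w^{K_4'}_{E'_{\min}}=w_4$. Granted these, Proposition~\ref{proposition: 3 point property} applied to $K'$, combined with uniformity ($|K'|\sim h^2$, $\max_E|\delta^{K_4'}_E y|\sim h^{-1}$), yields
\[
\left\|\frac{\partial(\theta w)}{\partial x}\right\|_{0,K'}^2\gtrsim |K'|\,\max_{E\in\partial K_4'}|\delta^{K_4'}_E y|^2\,(w^{K_4'}_{E'_{\min}})^2\gtrsim w_4^2,
\]
which chained with the previous step completes the proof.

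I expect the main obstacle to be the geometric verification (i)--(ii). Concavity of $\theta$ in $K_4$ together with $\theta_5=\theta_6=0$ says that the peak line of $\theta$ crosses $K_4$ near vertex $4$, with vertices $5$ and $6$ lying on the common side at $y$-distance at least $h$ from it; under the uniform-mesh assumption I expect this to force the peak line to exit $K_4$ without entering $K_4'$, so that $\theta$ is linear (hence convex) on $K_4'$ and positive at vertex $4$. The same constraint, together with the fact that the edge directions in $K_4'$ are parallel to those in $K_4$ by uniformity, should single out vertex $4$ as the vertex of $K_4'$ opposite the near-horizontal edge $E'_{\min}$. These reductions are routine once the picture is drawn, but require a careful enumeration of sub-cases according to the position of the peak line relative to the rotated mesh.
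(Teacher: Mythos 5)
Your reduction has the right skeleton---bound the left-hand side through the nodal value at the shared midpoint and then invoke Proposition~\ref{proposition: 3 point property} on $K'$---and this is exactly how the paper argues in \emph{one} of its two sub-cases. But your step (ii) is not a routine geometric verification: it is false in general, and the configuration where it fails is precisely what forces the paper's second sub-case. Because the mesh is uniform, $K'_4$ is the point reflection of $K_4$ through the shared midpoint, so the edge of $K'_4$ opposite that midpoint is a parallel translate of the edge $E_0$ of $K_4$ opposite it; hence the shared midpoint is opposite $E'_{min}$ in $K'_4$ if and only if $E_0=E_{min}$ in $K_4$. Nothing in the hypotheses of Case 2 forces $E_0=E_{min}$: the nonzero vertex may lie close to the zero boundary of the strip, with one of the two edges adjacent to it having the smallest $y$-extent (the paper's case (b)). In that situation Proposition~\ref{proposition: 3 point property} controls $w$ at the shared midpoint only through its second term, i.e.\ with the factor $(\delta_{E'_{min}}^{K'_4}y)^2=\min_{E\in\partial K_4}|\delta_{E}^{K_4}y|^2$, which is \emph{not} comparable to $h^{-2}$ uniformly in the rotation angle (for small $\omega$ one family of fine edges is nearly horizontal and this minimum can be arbitrarily small, even zero). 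Since your step 1 discarded $\theta_4\le 1$, you are left needing $w_4^2\lesssim \|\partial_x(\theta w)\|_{0,K'}^2$, which is simply not available in that case.

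The paper closes this gap with an idea your sketch is missing: when $E_0\neq E_{min}$, the value of $\theta$ at the nonzero vertex is itself small, $\theta_{E_0}^{K_4}\lesssim h^{-1}\min_{E\in\partial K_4}|y_j-y_i|$, because $\theta$ vanishes at both endpoints of $E_0$ and is $h^{-1}$-Lipschitz in $y$, and $E_{min}$ is then adjacent to the nonzero vertex. Retaining this factor yields $\|\partial_x I_h(\theta w)\|_{0,K}^2\lesssim |K|\,\min_{E\in\partial K_4}|\delta_E^{K_4}y|^2\,(w_{E_0}^{K_4})^2$, which is exactly matched by the second term of Proposition~\ref{proposition: 3 point property} applied to $K'$. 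In the complementary case $E_0=E_{min}$ your argument essentially coincides with the paper's case (a) (and your observation $\max_E|\delta_E^{K'_4}y|\sim h^{-1}$, $|K'|\sim h^2$ is fine). Finally, your point (i)---that $\theta$ is nonzero and convex on $K'_4$ so that Proposition~\ref{proposition: 3 point property} is applicable there---is only conjectured in your sketch; the paper at least supplies a support-width argument for the nonvanishing of $\theta$ at the midpoints of $K'$, and some such verification must appear in a complete proof.
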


 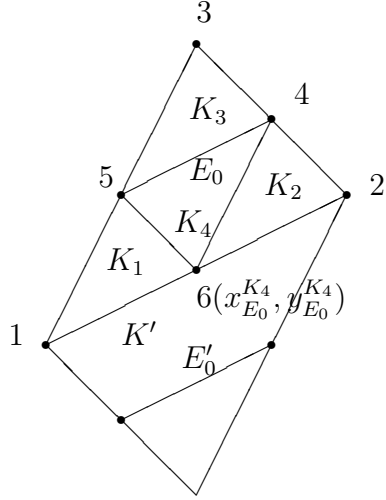
\begin{figure}[!h]
 \begin{center}
 \setlength{\unitlength}{1.0cm}
 \begin{picture}(5,5)
 \put(0,0){\line(1,2){2}}
 \put(0,0){\line(2,1){4}}
 \put(4,2){\line(-1,1){2}}
 \put(1,2){\line(2,1){2}}
 \put(1,2){\line(1,-1){1}}
 \put(2,1){\line(1,2){1}}

 \put(0,0){\circle*{0.1}}\put(-0.5,0){1}
 \put(4,2){\circle*{0.1}}\put(4.3,2){2}
 \put(2,4){\circle*{0.1}}\put(2,4.3){3}
 \put(3,3){\circle*{0.1}}\put(3.3,3.2){4}
 \put(1,2){\circle*{0.1}}\put(0.7,2.1){5}
 \put(2,1){\circle*{0.1}}\put(2,0.5){6$(x_{E_{0}}^{K_4},y_{E_{0}}^{K_4})$}

 \put(0.8,1){$K_{1}$}
 \put(2.9,2){$K_{2}$}
 \put(1.9,3){$K_{3}$}
 \put(1.7,1.5){$K_{4}$}

 \put(2,-2){\line(-1,1){2}}
 \put(2,-2){\line(1,2){2}}
 \put(1,0){$K'$}

 \put(1.9,2.2){$E_0$}
 \put(1,-1){\line(2,1){2}}
 \put(1.8,-0.3){$E_0'$}
 \put(1,-1){\circle*{0.1}}
 \put(3,0){\circle*{0.1}}

 \end{picture}
 \end{center}
 \vspace{2cm}
 \caption{The coarse elements $K$ and $K'$ sharing the same midpoint
 \label{fig:1 point fig}}
 \end{figure}

 \begin{proof}
 Without loss of generality, assume $\theta_{E_{0}}^{K_{4}}$ is the only nonzero
 value. There are two possibilities: (a) $E_{0}=E_{min}$; and  (b) $E_{0}\neq E_{min}$.

\medskip
\textbf{Proof in case (a).} Since $E_{0}=E_{min}$, we conclude that $|\delta_{E_{0}}^{K_{4}} y|   =\min\limits_{E\in\partial
K_{4}}\{|\delta_{E}^{K_{4}} y|\}$.

We  then have
\begin{eqnarray*}
 \left\|\frac{\partial (I_{h}(\theta w))}{\partial x}\right\|_{0,K}^{2}
 &= &\frac{|K|}{4}\sum\limits_{E\in\partial K_{4}} |\delta_{E}^{K_{4}} y|^{2}(\theta_{E_{0}}^{K_{4}}w_{E_{0}}^{K_{4}})^{2}
 \quad (\mbox{from~} \eqref{eq:derivative identity})\\
 &\lesssim &|K|\max\limits_{E\in\partial
  K_{4}}\{|\delta_{E}^{K_{4}}y|\}^{2}(w_{E_{0}}^{K_{4}})^{2}.
 \end{eqnarray*}

 Since the
 partition $\Tri{H}$ is uniform (see Figure \ref{fig:1 point fig}),
 and $K'$ is the element sharing the same point
$(x_{E_{0}}^{K_4},y_{E_{0}}^{K_4})$ with $K$, we know that the  values of $\theta$ at midpoints of $K'$ are all nonzero. This is so, because the
support of $\theta$, whose width is $2h$ must include $K'$ in its interior.
 Assume now that $E'_{0}$ is the edge opposite to point $(x_{E_{0}}^{K_4},y_{E_{0}}^{K_4})$ in $K'_{4}$
 (i.e.  $(x_{E_{0}}^{K_4},y_{E_{0}}^{K_4})=
 (x_{E'_{0}}^{K'_{4}},y_{E'_{0}}^{K'_{4}})$, see Figure~\ref{fig:1
   point fig}). Observe that
 $E'_{0}  =E'_{min}$ or $|\delta_{E'_{0}}^{K'_{4}} y|  =\min\limits_{E'\in\partial K'_{4}}\{|\delta_{E'}^{K'_{4}} y|\}$,
 because $E_0'$ is a parallel translation of $E_0$.
By Proposition \ref{proposition: 3 point property}, we now have
 \begin{equation*}
 \left\|\frac{\partial (\theta w)}{\partial x}\right\|_{0,K'}^{2}
 \gtrsim
  |K'|\max\limits_{E'\in\partial
  K'_{4}}\{|\delta_{E'}^{K'_{4}}y|\}^{2}(w_{E'_{0}}^{K'_{4}})^{2}
 = |K|\max\limits_{E\in\partial
  K_{4}}\{|\delta_{E}^{K_{4}}y|\}^{2}(w_{E_{0}}^{K_{4}})^{2}.\\
 \end{equation*}

 So $\|\frac{\partial (I_{h}(\theta w))}{\partial x}\|_{0,K}^{2}
 \lesssim \|\frac{\partial (\theta w)}{\partial x}\|_{0,K'}^{2}$, and
 this completes the proof in case (a).

\medskip
\textbf{Proof in case (b).}  In case (b) we have $E_{0}\neq E_{min}$ and hence $|\delta_{E_{0}}^{K_{4}} y|
  \neq\min\limits_{E\in\partial K_{4}}\{|\delta_{E}^{K_{4}} y|\}$.  Since $\theta_{E_{0}}^{K_{4}}$ is the only nonzero value among the  values of $\theta$ at the vertices of $K_4$, we easily get
 $$\theta_{E_{0}}^{K_{4}}\lesssim h^{-1}\min\limits_{E\in\partial K_{4}}\{2|K_4|\delta_{E}^{K_{4}} y|\}.$$

 Then
\begin{eqnarray*}
 \left\|\frac{\partial (I_{h}(\theta w))}{\partial x}\right\|_{0,K}^{2}
 &=&\frac{|K|}{4}\sum\limits_{E\in\partial K_{4}} |\delta_{E}^{K_{4}} y|^{2}(\theta_{E_{0}}^{K_{4}}w_{E_{0}}^{K_{4}})^{2}
 \quad (\mbox{from~} \eqref{eq:derivative identity})\\
 &\lesssim& |K|\max\limits_{E\in\partial K_{4}} \{|\delta_{E}^{K_{4}} y|\}^{2}(\theta_{E_{0}}^{K_{4}})^{2}(w_{E_{0}}^{K_{4}})^{2}\\
 &\lesssim&|K|\max\limits_{E\in\partial K_{4}} \{2|K_4|\delta_{E}^{K_{4}} y|\}^{2}
            h^{-2}\min\limits_{E\in\partial K_{4}}\{|\delta_{E}^{K_{4}}
            y|\}^{2}(w_{E_{0}}^{K_{4}})^{2}\\
 &\lesssim& |K|\min\limits_{E\in\partial K_{4}}\{|\delta_{E}^{K_{4}}
            y|\}^{2}(w_{E_{0}}^{K_{4}})^{2}.
 \end{eqnarray*}

 Let $K'$ be the same as before, then by Proposition \ref{proposition: 3 point property}, we have
 \begin{equation*}
 \left\|\frac{\partial (\theta w)}{\partial x}\right\|_{0,K'}^{2}
 \gtrsim |K'|\min\limits_{E'\in\partial
  K'_{4}}\{|\delta_{E'}^{K'_{4}}y|\}^{2}(w_{E'_{0}}^{K'_{4}})^{2}
 = |K|\min\limits_{E\in\partial
  K_{4}}\{|\delta_{E}^{K_{4}}y|\}^{2}(w_{E_{0}}^{K_{4}})^{2}.
 \end{equation*}

 Combining the last two inequalities then gives $\|\frac{\partial
   (I_{h}(\theta w))}{\partial x}\|_{0,K}^{2} \lesssim
 \|\frac{\partial (\theta w)}{\partial x}\|_{0,K'}^{2}$. This
 completes the proof in case (b), and also the proof of the Lemma.
\end{proof}

The next Lemma gives the stability estimates in the last case (Case 3) and we refer to Figure~\ref{fig: 2 point fig} for clarifying the
notation.
\begin{lemma}\label{lemma: 2 point stability}
  Assume that $\theta$ is nonzero at exactly two vertices of $K_4$ and
  concave in $K_4$ (Case 3). Let $K^\prime$ be an element
  from $\Tri{H}$ which shares with $K_4$ the vertex at which $\theta$
  has larger value on one of its edges.  Then the following inequality
  holds
$$\left\|\frac{\partial (I_{h}(\theta w))}{\partial x}\right\|_{0,K}^{2}\lesssim
 \left\|\frac{\partial (\theta w)}{\partial x}\right\|_{0,K\bigcup K'}^{2}.$$
\end{lemma}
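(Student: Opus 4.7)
The plan is to adapt the two-part strategy from the proof of Lemma \ref{lemma: 1 point stability} (Case 2) to the present setting where $\theta$ has two nonzero nodal values on $K_4$ rather than one. Label the vertices of $K_4$ so that $\theta_6=0$ and $\theta_4\ge\theta_5>0$; then $K'$ is the coarse neighbour of $K$ across the edge of $K$ containing vertex $4$. First I would derive an upper bound on $\|\partial_x I_h(\theta w)\|_{0,K}^2$. Since $I_H w=0$ implies $w_1=w_2=w_3=0$, and since $\theta_6=0$, the only potentially nonzero vertex values of $\theta w$ on the refinement of $K$ are $\theta_4 w_4$ and $\theta_5 w_5$. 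Applying \eqref{eq:derivative identity} with $v=I_h(\theta w)$ on each sub-triangle $K_l$, summing, and using $\theta_i\le 1$ yields
\begin{equation*}
\|\partial_x I_h(\theta w)\|_{0,K}^2 \lesssim |K|\bigl(\max_{E\in\partial K_4}|\delta^{K_4}_E y|\bigr)^2\,(w_4^2+w_5^2).
\end{equation*}

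Next I would obtain a matching lower bound for $\|\partial_x(\theta w)\|_{0,K\cup K'}^2$ controlling $w_4^2$ and $w_5^2$ separately. For $w_4^2$: because $\theta_4$ is the larger of the two nonzero vertex values, the peak of $\theta$ lies on the ``vertex~4 side'' of $K_4$, so using the uniform mesh and the fact that $\theta$ has support of width $2h$ one checks that $\theta$ is nonzero and convex on $K'_4$. Proposition \ref{proposition: 3 point property} applied to $K'$ then gives
\begin{equation*}
\|\partial_x(\theta w)\|_{0,K'}^2 \gtrsim |K'|\bigl(\max_{E'}|\delta^{K'_4}_{E'} y|\bigr)^2\bigl(w^{K'_4}_{E'_{min}}\bigr)^2.
\end{equation*}
Since $K$ and $K'$ are congruent in the uniform triangulation, the magnitudes of $\delta y$ on $K'_4$ match those on $K_4$; identifying vertex~4 with the vertex of $K'_4$ opposite $E'_{min}$ (exactly as in Case~2(a)) converts this into $|K|(\max_E|\delta^{K_4}_E y|)^2\, w_4^2 \lesssim \|\partial_x(\theta w)\|_{0,K'}^2$. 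For $w_5^2$: vertex $5$ is not in $K'$, so I would work on the sub-triangle $K_3$ of $K$, whose vertices are the coarse vertex $3$ (with $w_3=0$) and the midpoints $4$ and $5$. The derivative identity yields $\partial_x w|_{K_3}=(\delta^{K_3}_\cdot y)\,w_4+(\delta^{K_3}_\cdot y)\,w_5$; a computation modeled on the proof of Proposition \ref{proposition: 3 point property} gives
\begin{equation*}
\|\partial_x(\theta w)\|_{0,K_3}^2 \gtrsim |K|\bigl(\max_E|\delta^{K_4}_E y|\bigr)^2\,\theta_5^2\, w_5^2 - C\,\|\partial_x(\theta w)\|_{0,K'}^2,
\end{equation*}
where the $w_4$ cross term has been absorbed by the previously-obtained $K'$-estimate, and the $\theta_5^2$ weight is then absorbed into the upper-bound side by the elementary inequality $\theta_5\le 1$. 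Summing the two lower bounds and comparing with the upper bound finishes the proof.

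The main obstacles are two. First is the geometric verification underlying the $w_4$-bound: one must check that $\theta$ is nonzero and convex on $K'_4$, and identify precisely which vertex of $K'_4$ corresponds to vertex $4$ of $K$, so as to place $w_4$ in the role of $w^{K'_4}_{E'_{min}}$ in Proposition \ref{proposition: 3 point property}; this requires a case analysis according to the orientation of $K'$ relative to $K$ in the uniform rotated mesh, exactly analogous to the split into cases (a) and (b) in the proof of Lemma \ref{lemma: 1 point stability}. The second, genuinely new, obstacle is extracting $w_5^2$ from $K$ alone: Proposition \ref{proposition: 3 point property} does not apply directly on $K$ because $\theta$ is concave, not convex, on $K_4$, so one must work on the adjacent sub-triangle $K_3$ and carefully bound $\|\theta\|_{0,K_3}^2$ from below using that $\theta$ is affine (rather than broken) on $K_3$ in Case~3 and that $\theta_5$ is of order one on the portion of $K_3$ near vertex~5.
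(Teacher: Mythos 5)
Your opening step---using $\theta\le 1$ to reduce everything to $\|\partial_x I_h(\theta w)\|_{0,K}^2\lesssim |K|\,\bigl(\max_{E\in\partial K_4}|\delta_E^{K_4}y|\bigr)^2\,(w_4^2+w_5^2)$---is a valid but fatally lossy upper bound, and the two lower bounds you then need are not just unproved but false. First, inside $K'_4$ the vertex $4$ is opposite the edge parallel to the coarse edge shared by $K$ and $K'$, whose $y$-extent equals $|y_5-y_6|$; so vertex $4$ plays the role of the vertex opposite $E'_{min}$ in Proposition~\ref{proposition: 3 point property} only when $|y_5-y_6|$ is the smallest of the three $y$-extents, i.e.\ in the paper's subcase (b). In subcase (a), where $E_{min}$ is the edge $4$--$5$, the proposition controls $w_4$ only with the \emph{minimal} coefficient $(y_5-y_4)^2$, and your claimed bound $|K|\max_E|\delta_E^{K_4}y|^2 w_4^2\lesssim\|\partial_x(\theta w)\|_{0,K'}^2$ fails: take the uniform grid rotated by a small angle $\omega$, so that the edge $4$--$5$ is nearly horizontal ($|y_4-y_5|\simeq h\sin\omega$), and position the strip so that $\theta_6=0$ and $\theta_4,\theta_5\approx 1$ (this is Case~3, subcase (a)). Choosing $w$ to be the fine interpolant of a tent function of $y$ alone that vanishes at the $y$-levels of the coarse vertices and equals $1$ near $y_4\approx y_5$ (explicitly, with $s=\sin\omega$, $c=\cos\omega$, values such as $s/(s+c)$ and $c/(s+c)$ at the remaining midpoints of $K$ and $K'$), a direct computation gives $\partial_x w=O(\sin\omega/h)$ on every fine triangle of $K\cup K'$, hence $\|\partial_x(\theta w)\|_{0,K\cup K'}^2=O(\sin^2\omega)$, while $w_4\approx w_5\approx1$ and $|K|\max_E|\delta_E^{K_4}y|^2(w_4^2+w_5^2)\simeq 1$. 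The same family defeats your $K_3$-based extraction of $w_5$: on $K_3$ one only sees the combination $(y_6-y_5)w_5+(y_4-y_6)w_4$, which is $O(h\sin\omega)$ here, so subtracting $C\|\partial_x(\theta w)\|_{0,K'}^2$ (itself $O(\sin^2\omega)$) cannot produce $|K|\max^2\theta_5^2w_5^2\simeq1$. Finally, the remark that ``$\theta_5^2$ is absorbed into the upper-bound side by $\theta_5\le1$'' points the wrong way: you would need $w_5^2\lesssim\theta_5^2w_5^2$.

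The lemma itself survives this example because its genuine left-hand side is also $O(\sin^2\omega)$; the cancellation you discarded at the start is exactly what the paper keeps. In its case (a) the paper never pairs $w_4$ or $w_5$ with the maximal coefficient: it bounds $\|\partial_x I_h(\theta w)\|_{0,K}^2$ by $\frac1{|K|}\bigl\{((y_5-y_4)w_4)^2+((y_6-y_5)w_5+(y_4-y_6)w_4)^2\bigr\}$, using $\theta\le1$ only on terms already carrying the small factor $(y_5-y_4)$ and using $|\theta_4-\theta_5|=|y_4-y_5|/h$ to tame the mixed term; this is then matched by the same combination seen on $K$ together with the min-coefficient control of $w_4$ from $K'$. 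Only in case (b), where vertex $4$ really is opposite $E'_{min}$ in $K'_4$, does it invoke the max-coefficient part of Proposition~\ref{proposition: 3 point property}, after first using $\theta_5\lesssim h^{-1}|y_6-y_5|$ to weaken the $w_5$ terms. So the (a)/(b) split is not a routine orientation check as your plan suggests: it determines which coefficient may legitimately multiply $w_4$, and any argument launched from the $\theta$-free bound $|K|\max^2(w_4^2+w_5^2)$ cannot be closed.
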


 \begin{figure}[!h]
 \vspace{1.5cm}
 \begin{center}
 \setlength{\unitlength}{1.0cm}
 \begin{picture}(5,5)

 \put(0,0){\line(1,2){2}}
 \put(0,0){\line(2,1){4}}
 \put(4,2){\line(-1,1){2}}
 \put(1,2){\line(2,1){2}}
 \put(1,2){\line(1,-1){1}}
 \put(2,1){\line(1,2){1}}

 \put(0,0){\circle*{0.1}}\put(-0.5,0){1}
 \put(4,2){\circle*{0.1}}\put(4.3,2){2}
 \put(2,4){\circle*{0.1}}\put(2,4.3){3}
 \put(3,3){\circle*{0.1}}\put(3.3,3.2){4}
 \put(1,2){\circle*{0.1}}\put(0.7,2.1){5}
 \put(2,1){\circle*{0.1}}\put(2,0.5){6}

 \put(0.8,1){$K_{1}$}
 \put(2.9,2){$K_{2}$}
 \put(1.9,3){$K_{3}$}
 \put(1.8,1.8){$K_{4}$}

 \put(4,2){\line(1,2){2}}
 \put(2,4){\line(2,1){4}}
 \put(3.8,3.5){$K'$}


 \end{picture}
 \end{center}
 \caption{The coarse elements $K$ and $K'$ sharing the same
   midpoint\label{fig: 2 point fig}}
 \end{figure}
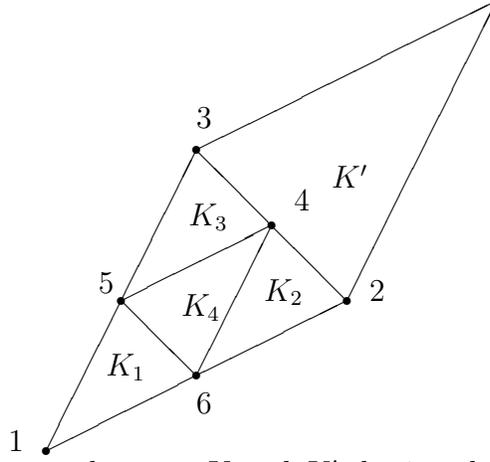

 \begin{proof}
 Without loss of generality we may assume that $\theta_4$ and
 $\theta_5$ are the only nonzero values of $\theta$, we may also
 assume that $\theta_{4}\geq\theta_{5}$. As a consequence,
$K'$ will share $(x_4,y_4)$ with $K$.

We consider two possibilities: (a)  $E_{min}=\{(x_4,y_4),(x_5,y_5)\}$; (b) $E_{min}\neq\{(x_4,y_4),(x_5,y_5)\}$.

\textbf{Proof of (a).} Since $E_{min}=\{(x_4,y_4),(x_5,y_5)\}$, we have that  $|y_{5}-y_{4}|=\min\{|y_{5}-y_{4}|,|y_{4}-y_{6}|,|y_{6}-y_{5}|\}$.
Hence, from~\eqref{eq:derivative identity} we obtain
\begin{eqnarray*}
 \left\|\frac{\partial (I_{h}(\theta w))}{\partial x}\right\|_{0,K}^{2}
 & \thickapprox &
 \frac{1}{|K|}\{
((y_{5}-y_{4})\theta_{4}w_{4})^{2}
 +((y_{5}-y_{4})\theta_{5}w_{5})^{2}\\
&&\,\,\quad +((y_{6}-y_{5})\theta_{5}w_{5}+(y_{4}-y_{6})\theta_{4}w_{4})^{2}\}.
\end{eqnarray*}

We want to bound now all the terms on the right side of the above relation with quantities independent of the values of $\theta$. From the fact
that $\theta \le 1$ we have that
$$((y_{5}-y_{4})\theta_{4}w_{4})^{2}\lesssim ((y_{5}-y_{4})w_{4})^{2}.$$
The other terms are bounded as follows
 \begin{eqnarray*}
 ((y_{5}-y_{4})\theta_5w_{5})^{2}
&\leq &
 ((y_{5}-y_{4})w_{5})^{2} \lesssim
 ((y_{5}-y_{4})w_{5})^{2} (\frac{y_{6}-y_{5}}{y_{4}-y_{6}})^{2}\\
 &= &(((y_{6}-y_{5})w_{5}+(y_{4}-y_{6})w_{4})\frac{y_{5}-y_{4}}{y_{4}-y_{6}}-(y_{5}-y_{4})w_{4})^{2}\\
 &\lesssim
 &((y_{6}-y_{5})w_{5}+(y_{4}-y_{6})w_{4})^{2}(\frac{y_{5}-y_{4}}{y_{4}-y_{6}})^{2}+((y_{5}-y_{4})w_{4})^{2}\\
 &\lesssim
 & ((y_{6}-y_{5})w_{5}+(y_{4}-y_{6})w_{4})^{2}+((y_{5}-y_{4})w_{4})^{2},
 \end{eqnarray*}
and also
 \begin{eqnarray*}
 ((y_{6}-y_{5})\theta_{5}w_{5}+(y_{4}-y_{6})\theta_{4}w_{4})^{2}
 &=&((y_{6}-y_{5})\theta_{5}w_{5}+(y_{4}-y_{6})\theta_{5}w_{4} -(y_{4}-y_{6})(\theta_{5}-\theta_{4})w_{4})^{2}\\
 &=&(((y_{6}-y_{5})w_{5}+(y_{4}-y_{6})w_{4})\theta_{5} -(y_{4}-y_{6})w_{4}\frac{y_{5}-y_{4}}{h})^{2}\\
 &\lesssim&((y_{6}-y_{5})w_{5}+(y_{4}-y_{6})w_{4})^{2}+((y_{5}-y_{4})w_{4})^{2}.
 \end{eqnarray*}

 Hence we get
 \begin{equation*}
 \left\|\frac{\partial (I_{h}(\theta w))}{\partial x}\right\|_{0,K}^{2}
 \lesssim\frac{1}{|K|}\{((y_{5}-y_{4})w_{4})^{2}+((y_{6}-y_{5})w_{5}+(y_{4}-y_{6})w_{4})^{2}\}.\\
 \end{equation*}

 In this case,
 \begin{equation*}
 \left\|\frac{\partial (\theta w)}{\partial x}\right\|_{0,K}^{2}
 \gtrsim \frac{1}{|K|}\{((y_{6}-y_{5})w_{5}+(y_{4}-y_{6})w_{4})^{2}\}.\\
 \end{equation*}

Since $K'$ denotes the element sharing $(x_4,y_4)$ with $K$, we obtain
\begin{eqnarray*}
 \left\|\frac{\partial (\theta w)}{\partial x}\right\|_{0,K'}^{2}
 &\gtrsim &\frac{1}{|K'|}
  \{\min\{|y_{5}^{K'}-y_{4}^{K'}|,|y_{4}^{K'}-y_{6}^{K'}|,|y_{6}^{K'}-y_{5}^{K'}|\}^{2}w_{4}^{2}\}\\
  &=&\frac{1}{|K|}
  \{\min\{|y_{5}-y_{4}|,|y_{4}-y_{6}|,|y_{6}-y_{5}|\}^{2}w_{4}^{2}\}\\
  &=&\frac{1}{|K|}\{(y_{5}-y_{4})^{2}w_{4}^{2}\},
 \end{eqnarray*}
 so
\begin{equation*}
 \left\|\frac{\partial(\theta w)}{\partial x}\right\|_{0,K\bigcup K'}^{2}
 \gtrsim
 \frac{1}{|K|}\{((y_{5}-y_{4})w_{4})^{2}+((y_{6}-y_{5})w_{5}+(y_{4}-y_{6})w_{4})^{2}\}
 \gtrsim \left\|\frac{\partial (I_{h}(\theta w))}{\partial
 x}\right\|_{0,K}^{2}.
 \end{equation*}
This completes the proof in case (a).

\textbf{Proof of (b).} In this case we have that $E_{min}\neq\{(x_4,y_4),(x_5,y_5)\}$, which is equivalent to
$|y_{5}-y_{4}|>\min\{|y_{5}-y_{4}|,|y_{4}-y_{6}|,|y_{6}-y_{5}|\}$.

 Since $\theta_{4}>\theta_{5}$ ($\theta_{4}=\theta_{5}$ can not be true in this case,
  because $\theta_{4}=\theta_{5}$ implies $y_4=y_5$), we can get
  $|y_6-y_5|<|y_4-y_6|$.
 It is then easy to see
 $|y_{6}-y_{5}|=\min\{|y_{5}-y_{4}|,|y_{4}-y_{6}|,|y_{6}-y_{5}|\}$.
 Then
 $$
 \theta_{5}\lesssim h^{-1}
 \min\{|y_{5}-y_{4}|,|y_{4}-y_{6}|,|y_{6}-y_{5}|\}=h^{-1}
 |y_{6}-y_{5}|.
 $$
 So from \eqref{eq:derivative identity}, we have
 \begin{eqnarray*}
 \left\|\frac{\partial (I_{h}(\theta w))}{\partial x}\right\|_{0,K}^{2}
 &\thickapprox
 &\frac{1}{|K|}\{((y_{5}-y_{4})\theta_{5}w_{5})^{2}+((y_{5}-y_{4})\theta_{4}w_{4})^{2}+((y_{6}-y_{5})\theta_{5}w_{5}+(y_{4}-y_{6})\theta_{4}w_{4})^{2}\}\\
 &\lesssim &\frac{1}{|K|}
 \{((y_{5}-y_{4})\theta_{5}w_{5})^{2}+((y_{5}-y_{4})\theta_{4}w_{4})^{2}+((y_{6}-y_{5})\theta_{5}w_{5})^{2}+((y_{4}-y_{6})\theta_{4}w_{4})^{2}\}\\
 &\lesssim &\frac{1}{|K|}
 \{((y_{6}-y_{5})w_{5})^{2}+((y_{4}-y_{6})w_{4})^{2}\}.
 \end{eqnarray*}

 In this case,
 \begin{equation*}
 \left\|\frac{\partial (\theta w)}{\partial x}\right\|_{0,K}^{2}
 \gtrsim \frac{1}{|K|}\{((y_{6}-y_{5})w_{5}+(y_{4}-y_{6})w_{4})^{2}\}.\\
 \end{equation*}

  Since the
 partition $\Tri{H}$ is uniform (see Figure \ref{fig: 2 point fig}),
 and $K'$ is the element sharing the same point
 $(x_4,y_4)$ with $K$, we know that the values of $\theta$ at
midpoints of $K'$ are all nonzero. Observe that the edge opposite to
 point $(x_4,y_4)$ in $K'_4$ is a parallel translation of the edge opposite to
 point $(x_4,y_4)$ in $K_4$. That is to say,
 point $(x_4,y_4)$ in $K'$ (also point 4 in $K$) is just the midpoint opposite to edge with
 $\min\{|y_{5}^{K'}-y_{4}^{K'}|,|y_{4}^{K'}-y_{6}^{K'}|,|y_{6}^{K'}-y_{5}^{K'}|\}$.
 By Proposition \ref{proposition: 3 point property}, we have
 \begin{eqnarray*}
 \left\|\frac{\partial (\theta w)}{\partial x}\right\|_{0,K'}^{2}
 &\gtrsim &\frac{1}{|K'|}
  \{\max\{|y_{5}^{K'}-y_{4}^{K'}|,|y_{4}^{K'}-y_{6}^{K'}|,|y_{6}^{K'}-y_{5}^{K'}|\}^{2}w_{4}^{2}\}\\
 &=&\frac{1}{|K|}
 \{\max\{|y_{5}-y_{4}|,|y_{4}-y_{6}|,|y_{6}-y_{5}|\}^{2}w_{4}^{2}\}.
 \end{eqnarray*}

 Combining the last two inequalities, we have

\begin{eqnarray*}
 \left\|\frac{\partial(\theta w)}{\partial x}\right\|_{0,K\bigcup K'}^{2}
 &\gtrsim
 &\frac{1}{|K|}\{((y_{6}-y_{5})w_{5}+(y_{4}-y_{6})w_{4})^{2}+\max\{|y_{5}-y_{4}|,|y_{4}-y_{6}|,|y_{6}-y_{5}|\}^{2}w_{4}^{2}\}\\
 &\gtrsim
 &\frac{1}{|K|}\{((y_{6}-y_{5})w_{5})^{2}+((y_{4}-y_{6})w_{4})^{2}\}\\
 &\gtrsim &\left\|\frac{\partial (I_{h}(\theta w))}{\partial x}\right\|_{0,K}^{2}.\\
\end{eqnarray*}
 This completes the proof of case (b), and also the proof of Lemma.

\end{proof}

 \begin{lemma}\label{lemma: fine grid}
 For any $w\in V_{h}$, if $I_{H}w=0$, then for any $1\leq i \leq L$,
 $$\left\|\frac{\partial (I_{h}(\theta_{i} w))}{\partial x}\right\|_{0}^{2}\lesssim
 \left\|\frac{\partial (\theta_{i} w)}{\partial x}\right\|_{0}^{2}. $$
 \end{lemma}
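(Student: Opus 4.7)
The plan is to upgrade the three local estimates established in Lemmas~\ref{lemma: 3 point stability}, \ref{lemma: 1 point stability}, and \ref{lemma: 2 point stability} into a global one by summing over all coarse triangles $K\in\Tri{H}$ and invoking a finite-overlap property. Partition $\Tri{H}$ into four disjoint subcollections $\mathcal{A}_0,\mathcal{A}_1,\mathcal{A}_2,\mathcal{A}_3$ according to the four cases describing the behavior of $\theta_i$ on the central fine subtriangle $K_4$ of $K$. Because $I_h(\theta_i w)$ is determined by its nodal values and $w$ vanishes at all coarse vertices (since $I_H w=0$), for $K\in\mathcal{A}_0$ the fine-grid interpolant $I_h(\theta_i w)$ is identically zero on $K$, so these terms contribute nothing.

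Consequently one writes
$$\left\|\frac{\partial(I_h(\theta_i w))}{\partial x}\right\|_0^2=\sum_{j=1}^{3}\sum_{K\in\mathcal{A}_j}\left\|\frac{\partial(I_h(\theta_i w))}{\partial x}\right\|_{0,K}^2.$$
For $K\in\mathcal{A}_1$ I would apply Lemma~\ref{lemma: 3 point stability} directly, bounding the local term by $\|\partial_x(\theta_i w)\|_{0,K}^2$, and these sum trivially to a piece of $\|\partial_x(\theta_i w)\|_0^2$. For $K\in\mathcal{A}_2$ and $K\in\mathcal{A}_3$ I would invoke Lemmas~\ref{lemma: 1 point stability} and \ref{lemma: 2 point stability}, each of which controls the local quantity on $K$ by the norm of $\partial_x(\theta_i w)$ over an enlarged set $K\cup K'$, where $K'$ is a uniquely determined neighboring coarse element.

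The main step, and the principal subtlety, is the finite-overlap argument that makes the summation of the Case~2 and Case~3 bounds legitimate. Since $\theta_i$ is supported in a $y$-strip of width $2h$, only a $O(1)$ number of rows of coarse elements intersect the support, and within these rows the assignment $K\mapsto K'$ prescribed in Lemmas~\ref{lemma: 1 point stability} and \ref{lemma: 2 point stability} is local: $K'$ is determined by sharing a single prescribed vertex of $K_4$ (the one where $\theta_i$ has a specific status). By uniformity of $\Tri{H}$, each given coarse element is therefore selected as the auxiliary $K'$ for at most a uniformly bounded number of $K\in\mathcal{A}_2\cup\mathcal{A}_3$, yielding
$$\sum_{K\in\mathcal{A}_2\cup\mathcal{A}_3}\left\|\frac{\partial(\theta_i w)}{\partial x}\right\|_{0,K\cup K'}^2\lesssim \left\|\frac{\partial(\theta_i w)}{\partial x}\right\|_0^2.$$
Combining this with the sum over $\mathcal{A}_1$ yields the claimed bound.

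The main obstacle I expect is precisely this bookkeeping: making the finite-overlap count for the maps $K\mapsto K'$ quantitative and case-independent. In Case~3 the rule selects $K'$ on the side of the vertex of $K_4$ where $\theta_i$ is larger, so a careful orientation check is needed to ensure that summing over all $K$ indeed repeats each $K'$ only $O(1)$ times. Once that geometric check is in place, the Lemma follows by straightforward term-by-term summation; no new analytic ingredient beyond the three local estimates is required.
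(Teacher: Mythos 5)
Your proposal is correct and follows essentially the same route as the paper: the paper's proof is exactly the summation of the local estimates from Lemmas~\ref{lemma: 3 point stability}, \ref{lemma: 1 point stability} and \ref{lemma: 2 point stability} over all $K\in\Tri{H}$, with the finite-overlap of the maps $K\mapsto K'$ left implicit. Your explicit case-by-case partition and the bounded-overlap count simply spell out the bookkeeping the paper takes for granted.
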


 \begin{proof}
 The estimate follows from the local (element-wise) estimates given by
 Lemma \ref{lemma: 3 point stability}, \ref{lemma: 1 point stability}, \ref{lemma: 2 point
 stability}, and summation over all elements from $\Tri{H}$.
 \end{proof}

\section{Proof of the theorem~\ref{theorem: the main theorem}\label{section:Proof}}

In this section we prove the convergence result that we have already stated in Section~\ref{section:Theorem}.

\renewcommand{\thetheorem}{\ref{theorem: the main theorem}}

\begin{theorem}
For any angle of rotation $\omega\in [0,\pi]$, the two-level iteration with coarse space $V_H$ and line (block) Gauss-Seidel smoother is a
uniformly convergent method. In fact, we have
\begin{equation*}
 \|E_{TL}\|_{a}^{2}\leq 1-\frac{1}{C},
 \end{equation*}
with constant $C$ independent of $\epsilon$ and $h$.
\end{theorem}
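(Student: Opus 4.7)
The plan is to apply the two-level convergence identity in Lemma~\ref{lemma: two-level by ludmil} and bound the constant $K$ uniformly in $\epsilon$ and $h$. By the standard subspace-correction argument this reduces to exhibiting, for every $v \in V_h$, a stable decomposition
$$
v = v_H + \sum_{i=1}^{L} v_i, \qquad v_H\in V_H,\ v_i\in V_i, \qquad \|v_H\|_a^2 + \sum_{i=1}^{L}\|v_i\|_a^2 \lesssim \|v\|_a^2.
$$

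I would take the natural choice $v_H = I_H v$ and $v_i = I_h(\theta_i w)$, where $w = v - I_H v$. Since $\{\theta_i\}$ is a partition of unity and $w \in V_h$, linearity of the nodal interpolation gives $\sum_i v_i = I_h(w) = w$, so the splitting is valid; moreover, by construction $I_H w = 0$, which is precisely the hypothesis that activates the fine-grid stability results of Section~\ref{section:StabilityF}. The coarse-part bound $\|v_H\|_a^2 \lesssim \|v\|_a^2$ is immediate from Lemma~\ref{lemma: two-level}, which controls $\|\partial_x I_H v\|_0$ and $\|\partial_y I_H v\|_0$ separately. For the $x$-component of the local corrections, Lemma~\ref{lemma: fine grid} yields $\|\partial_x v_i\|_0^2 \lesssim \|\partial_x(\theta_i w)\|_0^2$, and because $\theta_i$ is constant in $x$ this equals $\|\theta_i \partial_x w\|_0^2$; summing and using that at every point at most two $\theta_i$'s are nonzero with $0 \le \theta_i \le 1$ gives $\sum_i \|\partial_x v_i\|_0^2 \lesssim \|\partial_x w\|_0^2 \lesssim \|\partial_x v\|_0^2 \le \|v\|_a^2$, the middle step being Lemma~\ref{lemma: two level convergence}.

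The main obstacle is the weighted $y$-derivative contribution $\epsilon \sum_i \|\partial_y v_i\|_0^2$: one cannot simply mimic the $\partial_x$ argument because the product rule for $\partial_y(\theta_i w)$ produces the term $w\,\partial_y \theta_i$ with $|\partial_y\theta_i| = h^{-1}$, which blows up. My plan is to exploit that each $v_i$ is supported in a strip $\Omega_i$ of $y$-width $O(h)$ and invoke the inverse inequality $\|\partial_y v_i\|_0^2 \lesssim h^{-2}\|v_i\|_0^2$ on the quasi-uniform mesh. The $L^2$-stability of the nodal interpolant then gives $\|v_i\|_0 \lesssim \|w\|_{0,\Omega_i}$ (nodes outside $\Omega_i$ contribute nothing since $\theta_i$ vanishes there, and $0 \le \theta_i \le 1$ inside), and finite overlap of the strips yields $\sum_i \|v_i\|_0^2 \lesssim \|w\|_0^2$.

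The crucial step is now to pay the $h^{-2}$ using the $L^2$-approximation bound $\|w\|_0^2 \lesssim h^2 |v|_1^2$ from Lemma~\ref{lemma: two level convergence}, which gives
$$
\epsilon \sum_i \|\partial_y v_i\|_0^2 \lesssim \epsilon h^{-2} \|w\|_0^2 \lesssim \epsilon\bigl(\|\partial_x v\|_0^2 + \|\partial_y v\|_0^2\bigr) \le \|v\|_a^2,
$$
where the final step uses $\epsilon \le 1$, which is the regime of interest. Combining the three uniform estimates bounds $K$ by a constant independent of $\epsilon$ and $h$, and Lemma~\ref{lemma: two-level by ludmil} then yields the desired contraction $\|E_{TL}\|_a^2 \le 1 - 1/C$. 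The delicate point that makes the argument close is the interplay between the inverse inequality on the thin strip and the $L^2$-approximation of the coarse-grid interpolant: it is exactly the factor $h^2$ in the approximation estimate that absorbs the $h^{-2}$ blow-up from the inverse inequality, letting $\epsilon$ remain in front and be controlled by the anisotropic energy norm.
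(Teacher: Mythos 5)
Your proposal is correct and follows essentially the same route as the paper: the identical decomposition $w=v-I_Hv$, $v_i=I_h(\theta_i w)$, the bound on $\partial_x$ via Lemma~\ref{lemma: fine grid}, and the inverse inequality on the strips paid for by the $h^2$ approximation estimate of Lemma~\ref{lemma: two level convergence}, all fed into the identity of Lemma~\ref{lemma: two-level by ludmil}. The only cosmetic difference is that you include the coarse part $\|I_Hv\|_a^2$ in the stable decomposition, whereas the paper bounds $\|(I-\Pi_*)v\|_*\le\|(I-I_H)v\|_*$ and decomposes only $w$, which is equivalent for bounding $K$.
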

\addtocounter{theorem}{-1}
\renewcommand{\thetheorem}{\arabic{section}.\arabic{theorem}}

\begin{proof}

From Lemma \ref{lemma: two-level by ludmil}, we only need to prove $\sup\limits_{v\in V_{h}}\frac{\|(I-\Pi_{*})v\|_{*}^{2}}{\|v\|_{a}^{2}}\leq
C$.

 For any $v\in V_h$, let $w:=v-I_{H}v$, $w_{i}:=I_{h}(\theta_{i}(y)w)$, it is easy to see $I_H w=0$, and
\begin{equation*}
\sum_i w_{i}=\sum_i I_{h}(\theta_{i}(y)w)=I_{h}\sum_i(\theta_{i}(y)w)=I_h w=w.
\end{equation*}

 Then
 \begin{eqnarray*}
  \sup_{v\in V_{h}}\frac{\|(I-\Pi_{*})v\|_{*}^{2}}{\|v\|_{a}^{2}}
  &\leq &\sup_{v\in V_{h}}\frac{\|(I-I_H)v\|_{*}^{2}}{\|v\|_{a}^{2}}\\
  &=& \sup_{v\in V_{h}}\frac{\|w\|_{*}^{2}}{\|v\|_{a}^{2}}
  = \sup_{v\in V_{h}}\inf_{\sum_{i}\tilde{w}_{i}=w}\frac{\sum_{i}\|\tilde{w}_{i}\|_{a}^{2}}{\|v\|_{a}^{2}}
  \leq \sup_{v\in V_{h}}
  \frac{\sum_{i}\|w_i\|_{a}^{2}}{\|v\|_{a}^{2}}.
 \end{eqnarray*}

 So we only need to prove for any $v\in V_h$,
 $\sum_{i}\|w_i\|_{a}^{2}\lesssim \|v\|_{a}^{2}$.

 First, the decomposition is stable in $L^{2}$,
 \begin{equation}\label{stable1}
 \sum_{i}\|w_{i}\|_{0}^{2}\lesssim  \|\sum_{i}w_{i}\|_{0}^{2}=\|w\|_{0}^{2}\lesssim
 \sum_{i}\|w_{i}\|_{0}^{2}.
 \end{equation}
 Since the sum is along $x$ direction,
\begin{eqnarray}
 \sum\limits_{i}\|\partial_{x}w_{i}\|_{0}^{2}
 &=&\sum\limits_{i}\|\partial_{x}(I_{h}(\theta_{i}(y)w))\|_{0}^{2}\nonumber\\
 &\lesssim  &\sum\limits_{i}\|\partial_{x}(\theta_{i}(y)w)\|_{0}^{2}  \quad(\mbox{by Lemma \ref{lemma: fine grid}})\nonumber\\
 &= &\sum\limits_{i}\|\theta_{i}(y)\partial_{x}w\|_{0}^{2}\label{stable2}\\
 &= &\sum\limits_{i}\|\theta_{i}(y)\|^{2}\|\partial_{x}w\|_{0}^{2}\nonumber\\
 &\lesssim &\|\partial_{x}w\|_{0}^{2},\nonumber
 \end{eqnarray}
 then
 \begin{eqnarray*}
 \sum\limits_{i}\|w_{i}\|_a^2
 &= &\sum\limits_{i}\|\partial_{x}w_{i}\|_{0}^{2}+\sum\limits_{i}\epsilon\|\partial_{y}w_{i}\|_{0}^{2}\\
 &\lesssim &\sum\limits_{i}\|\partial_{x}w_{i}\|_{0}^{2}+\sum\limits_{i}\epsilon h^{-2}\|w_{i}\|_{0}^{2} \quad(\mbox{by inverse inequality})\\
 &\lesssim &\|\partial_{x}w\|_{0}^{2}+\epsilon h^{-2}\|w\|_{0}^{2} \quad(\mbox{by (\ref{stable1}) and (\ref{stable2})})\\
 &=&\|\partial_{x}(v-I_{H}v)\|_{0}^{2}+\epsilon h^{-2}\|v-I_{H}v\|_{0}^{2}\\
 &\lesssim &\|\partial_{x}v\|_{0}^{2}+\epsilon
 h^{-2}h^{2}|v|_{1}^{2} \quad(\mbox{by Lemma\,} \ref{lemma: two level convergence})\\
 &\lesssim &|v|_{a}^{2}.
 \end{eqnarray*}
\end{proof}

\section{Numerical Experiments\label{section:Numerical}}

\subsection{Tests for two-level method on a rotated uniform mesh}
We first test the performance of the two-level iterative method and its convergence properties with respect to $\epsilon$ and $h$. We pick as
initial triangulation a $4\times 4$ mesh with a characteristic mesh size $h_0= \frac{1}{2}\sqrt{2}$ as shown in Figure~\ref{fig:three-domains}.
We then apply the two-level method described earlier on sequence of meshes with mesh sizes $h_k=2^{-k}h_0$, $k=1,\ldots,6$.

 The energy norm of the error of two-level method $\|E_{TL}\|_{a}$ is depicted
 in Figure \ref{fig:uniform} show that two-level method  is uniformly
 convergent  w.r.t. $\epsilon$ and $h$, which agree with the
 theoretical results we have proved in the previous sections.
\begin{figure}[!htb]
{\centering
  \subfloat[][$\omega=0$]{
  \scalebox{0.4}[0.4]{\includegraphics{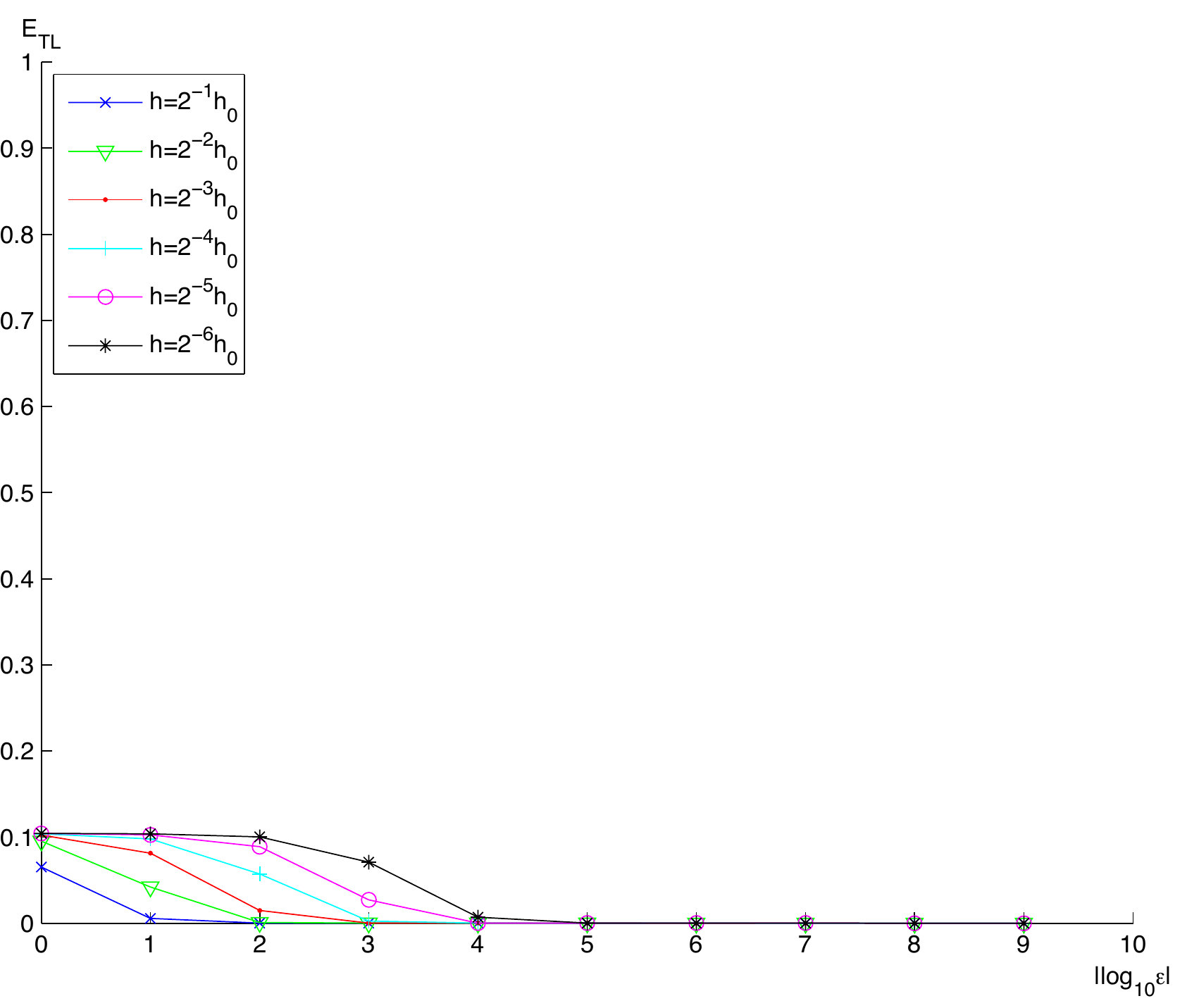}}
}
 \subfloat[][$\omega=\pi/6$]{
     \scalebox{0.4}[0.4]{\includegraphics{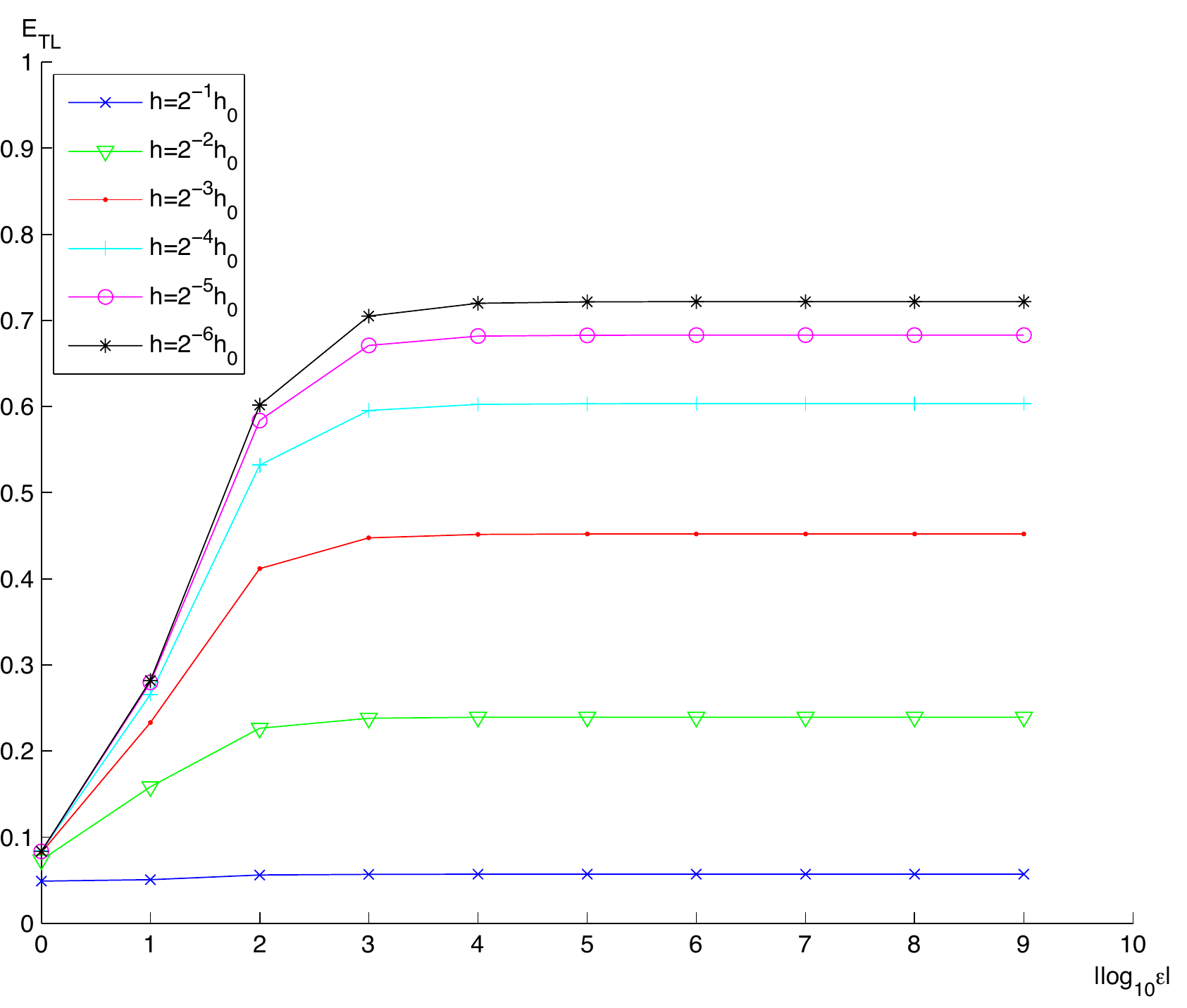}}
} }

{\centering \subfloat[$\omega=\pi/4$]{
     \scalebox{0.4}[0.4]{\includegraphics{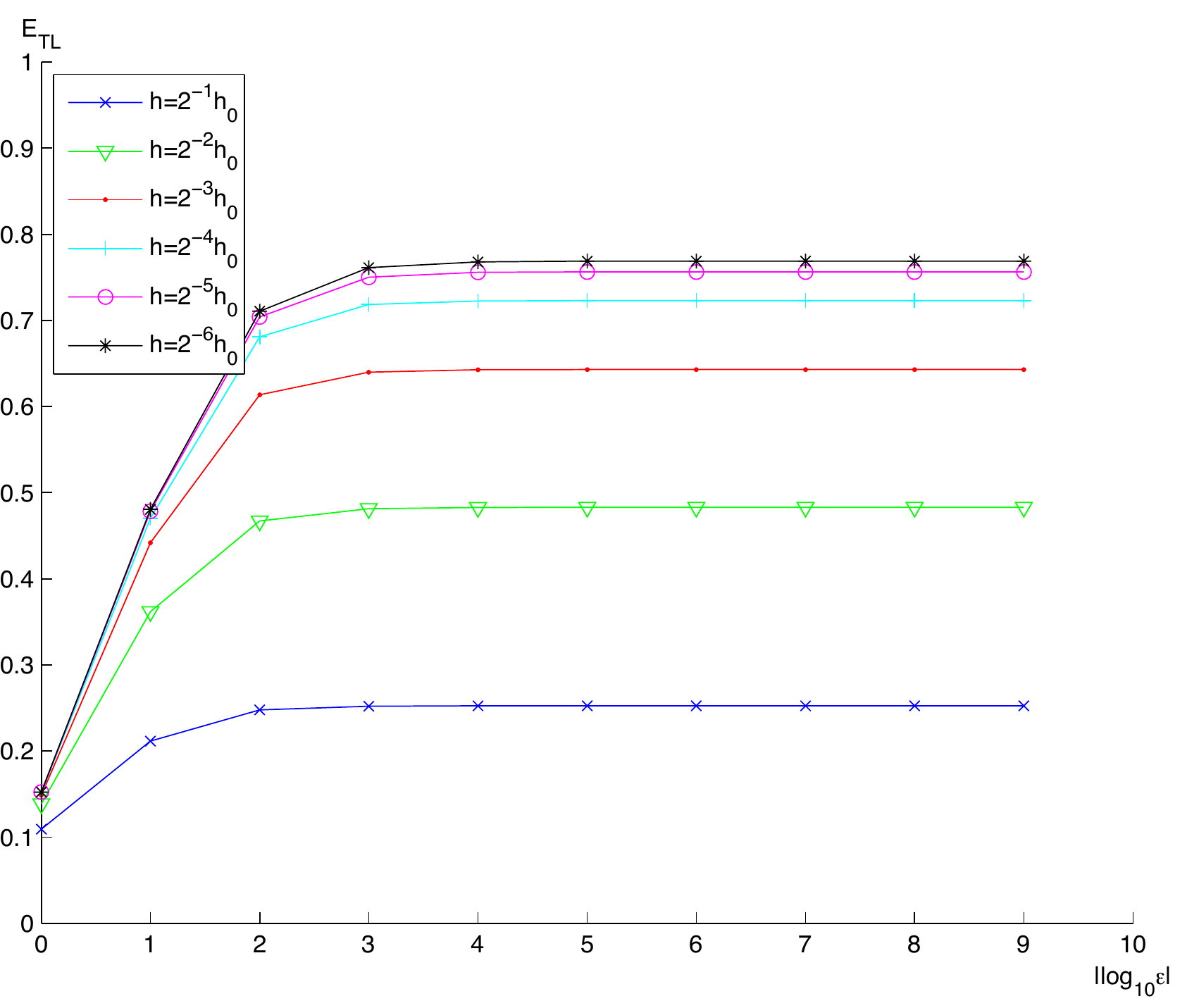}}
} }
 \caption{Plot of the convergence rates $\|E_{TL}\|^2_a$ versus
  $\log_{10}\epsilon$ for a sequence
  of uniform meshes with varying
  angle of anisotropy\label{fig:uniform}
}
\end{figure}

\subsection{Tests for two-level method on a general unstructured mesh}
Similarly to the case of uniform mesh, for a general unstructured mesh we choose $h_0=0.9$ as the maximum diameter of the triangles on the
coarsest mesh $\mathcal{T}_0$ as shown in Figure~\ref{fig:three-general-meshes}. This coarsest mesh is then refined $6$ times and get the mesh
to obtain a sequence of triangulations with characteristic mesh sizes $h=2^{-k}h_0$, $k=1,\ldots,6$.

\begin{figure}[!h]
 \centering
  \scalebox{0.33}[0.33]{\includegraphics{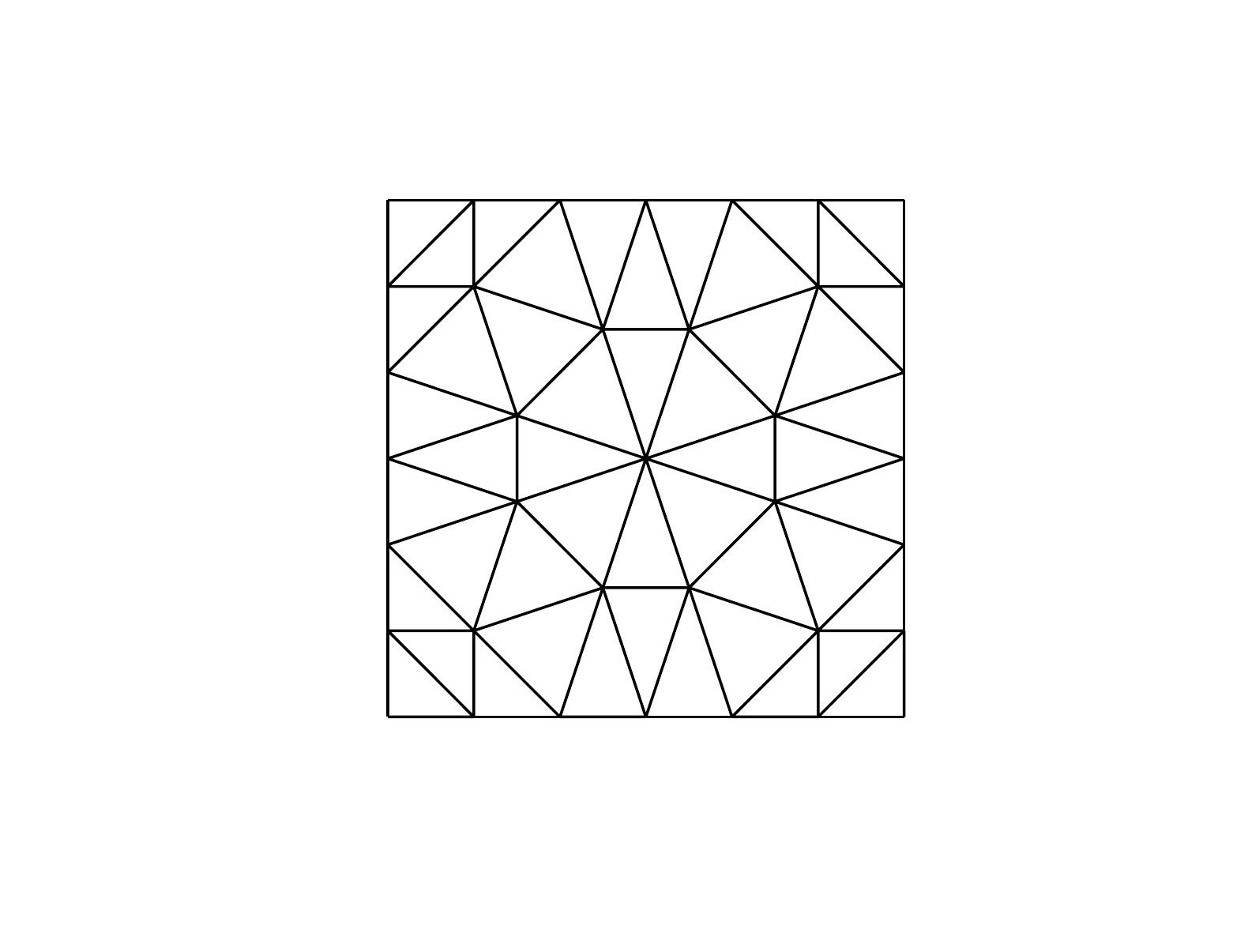}}
 \scalebox{0.33}[0.33]{\includegraphics{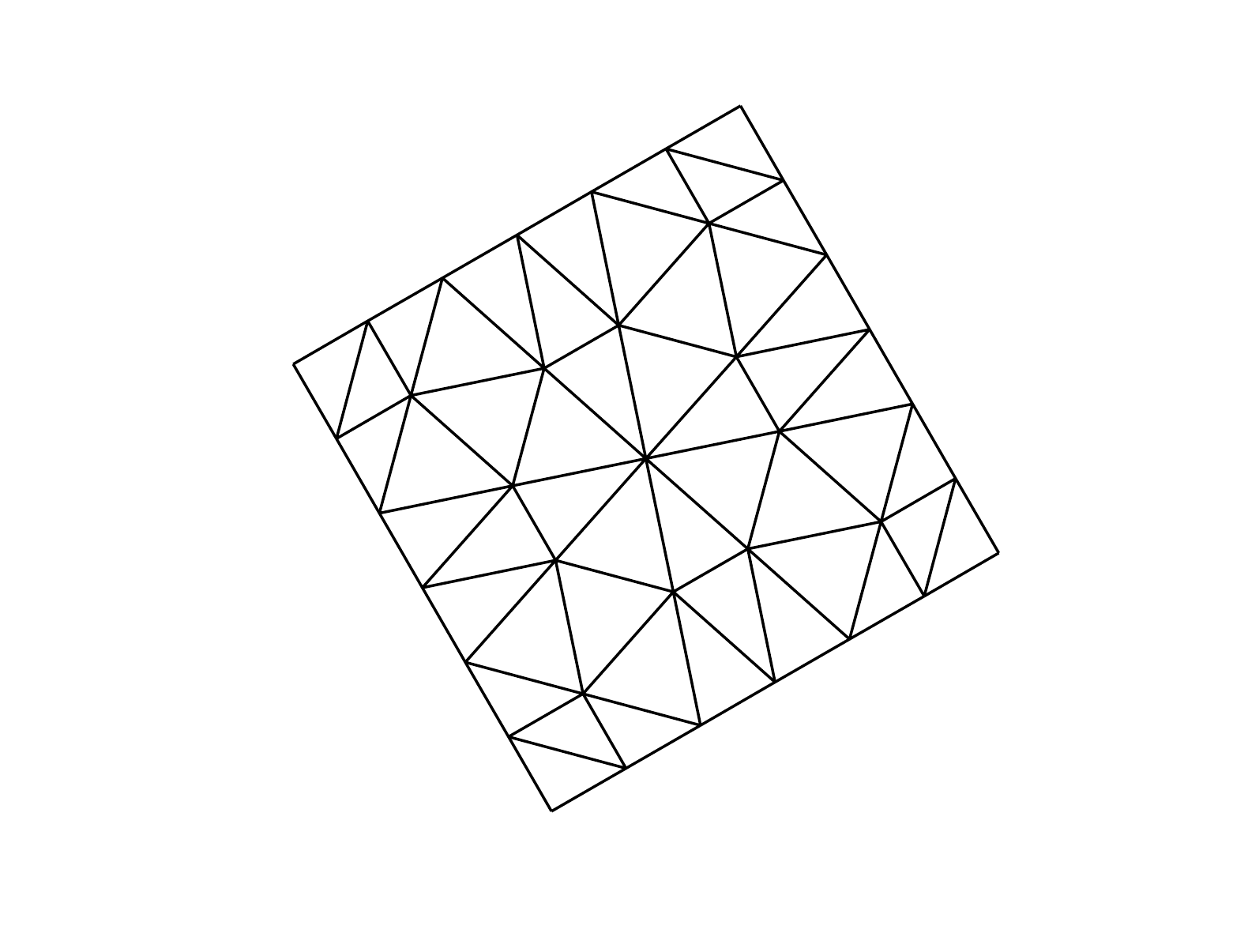}}
 \scalebox{0.33}[0.33]{\includegraphics{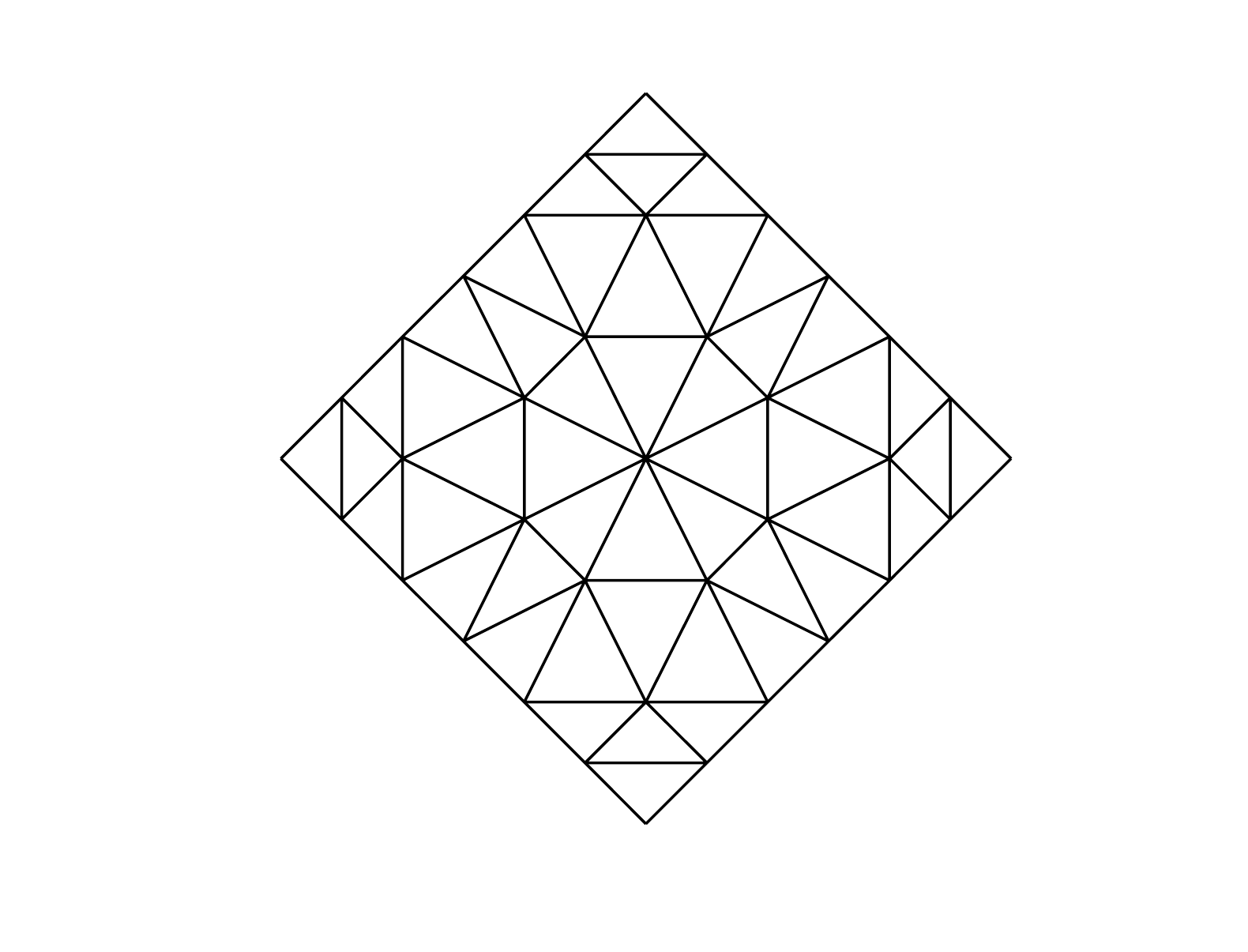}}
 \caption{Plot of unstructured grids used in the numerical examples
    for three values of the angle of
   rotation of anisotropy $\omega=0$, $\omega=\pi/6$ and
   $\omega=\pi/4$.
   \label{fig:three-general-meshes}}
\end{figure}



 The energy norm of the error propagation operator for the two level
 method, $\|E_{TL}\|_{a}$, are shown
 in Figure~\ref{fig:general}. The uniform convergence is clearly seen
 from the plots. Theoretical justification of such uniform convergence
 is however much more difficult and is a topic of current and future
 research.
\begin{figure}[!h]
{\centering
  \subfloat[$\omega=0$]{
    \scalebox{0.4}[0.4]{\includegraphics{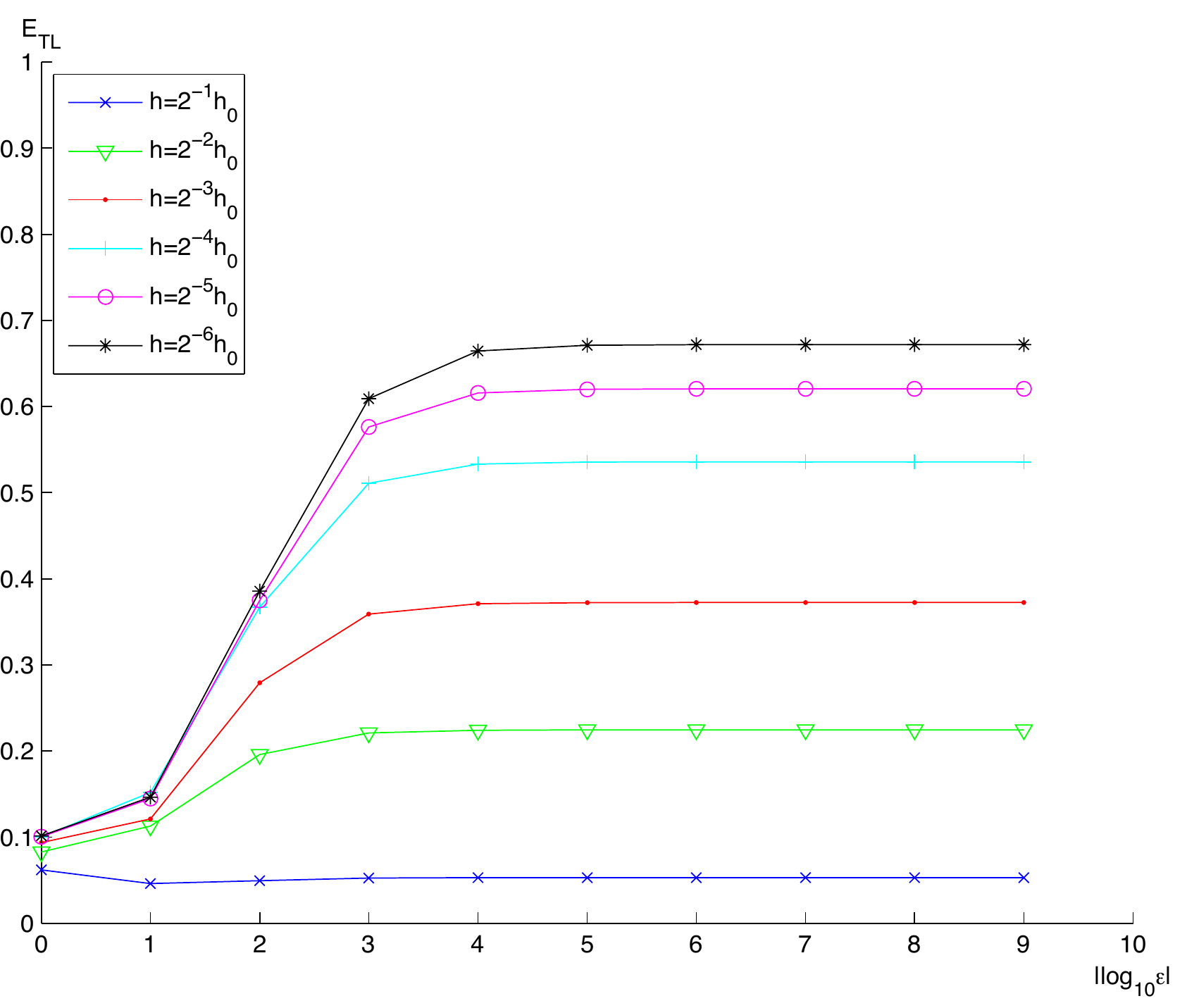}}
}
 \subfloat[$\omega=\pi/6$]{
        \scalebox{0.4}[0.4]{\includegraphics{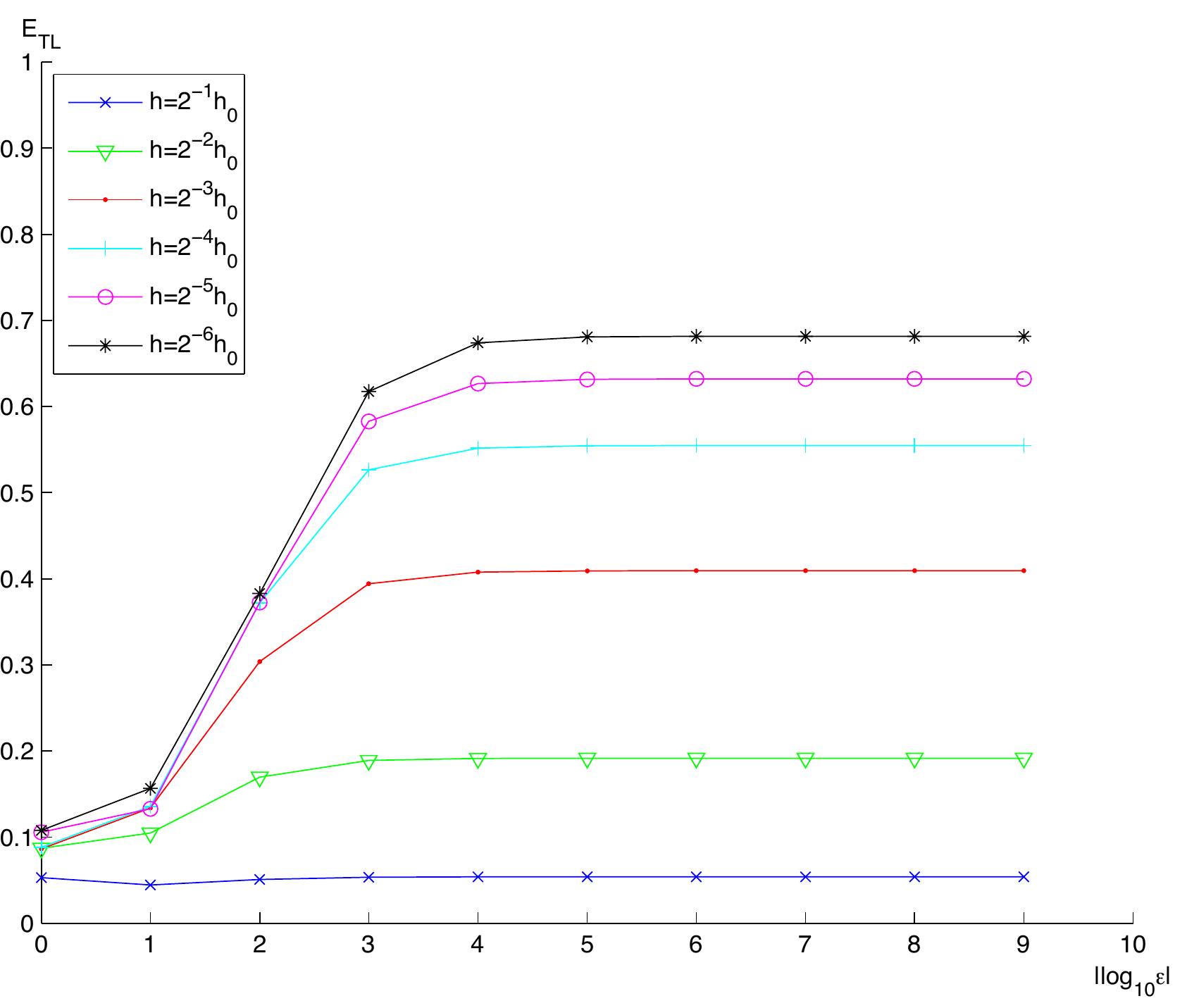}}
} }

{\centering \subfloat[$\omega=\pi/4$]{
        \scalebox{0.4}[0.4]{\includegraphics{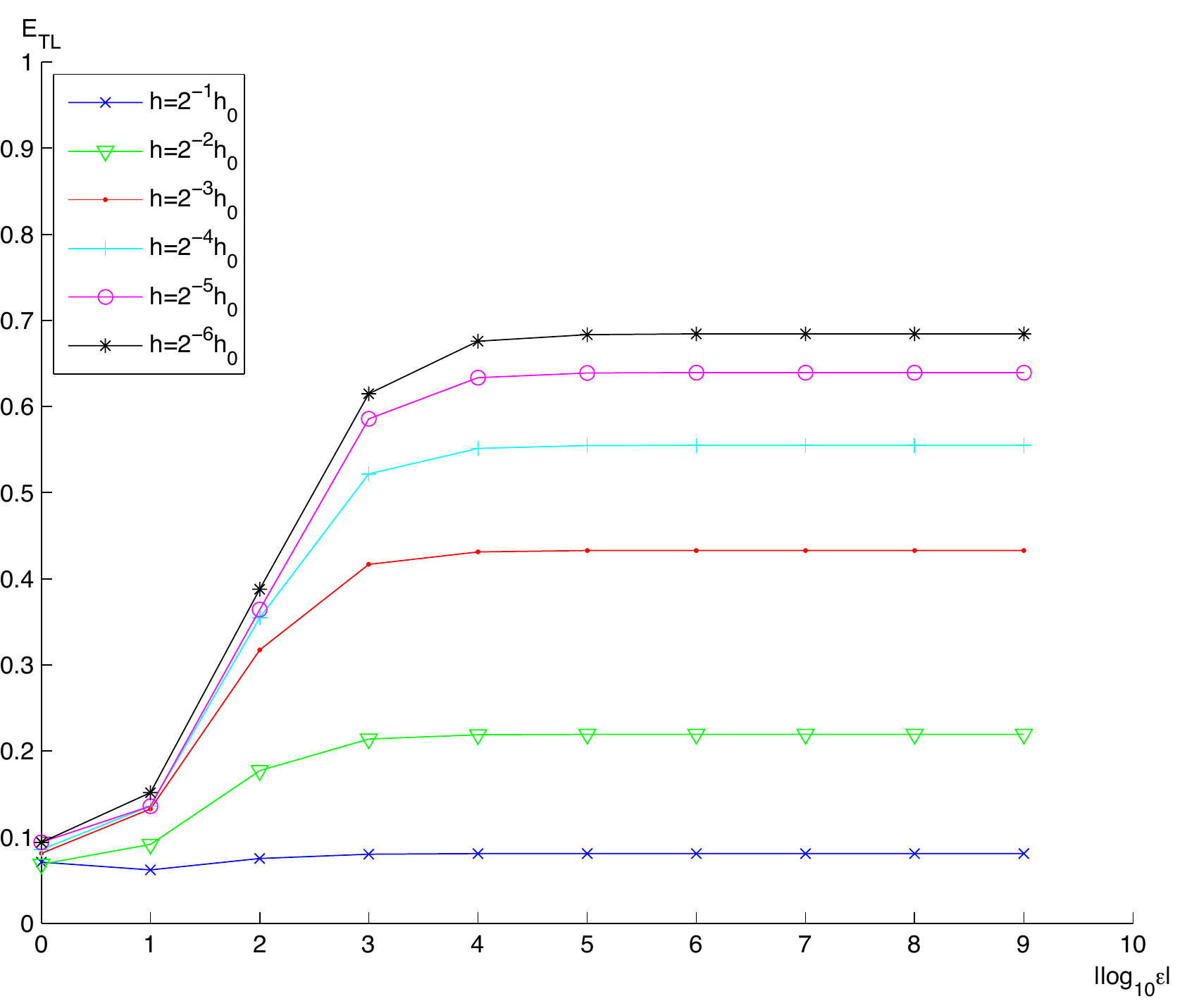}}
} }
 \caption{Plot of the convergence rates $\|E_{TL}\|^2_a$ versus
  $\log_{10}\epsilon$ for a sequence
  of refined general unstructured meshes with varying
  angle of anisotropy\label{fig:general}
}
\end{figure}

\bibliographystyle{unsrt}
\bibliography{GYu_refs}

\end{document}